\documentclass[11pt,reqno]{article}
\usepackage{authblk}
\usepackage{hyperref}
\usepackage{amssymb}
\usepackage{amsthm}
\usepackage{amsmath}
\usepackage{multirow}
\usepackage{float}
\usepackage{graphicx}
\usepackage{listings}
\usepackage{MnSymbol}
\usepackage{nicematrix}
\usepackage{adjustbox}
\usepackage{booktabs}
\newcommand{\subsubsubsection}[1]{%
  \paragraph{#1}\mbox{}\\
}

\newtheorem{theorem}{Theorem}[section]

\newtheorem{proposition}[theorem]{Proposition}

\newtheorem{remark}[theorem]{Remark}

\newtheorem{example}{Example}[section]

\theoremstyle{remark}

\begin{document}
\title{\Large\bfseries Equivalence Problem for Non-Linearizable Fourth-Order ODEs with Five-Dimensional Lie Symmetry subalgebra via Inductive Cartan Equivalence Method}
\author[1]{Sondos R. Khalil}
\author[1]{Ahmad Y. Al-Dweik\thanks{aaldweik@birzeit.edu}}
\author[1]{Marwan Aloqeili\thanks{maloqeili@birzeit.edu}}
\author[2]{F. M. Mahomed\thanks{Corresponding Author: Fazal.Mahomed@wits.ac.za}}
\affil[1]{Department of Mathematics, Birzeit University, Ramallah, Palestine}
\affil[2]{School of Computer Science and Applied Mathematics, 
University of the Witwatersrand, Johannesburg, Wits 2050,  South Africa}
\maketitle
\begin{abstract}
 Four coframes of invariant 1-forms are explicitly constructed using the Inductive Cartan equivalence method with rank zero corresponding to four distinct branches. These coframes are employed to characterize non-linearizable fourth-order ODEs under point transformation with a five-point symmetry Lie subalgebra. Moreover, we propose a procedure for obtaining the point transformation by using the derived invariant coframes, demonstrated through examples.
\end{abstract}
\bigskip
Keywords:  Cartan's equivalence method, Differential invariants,
Fourth-order ODEs, Equivalence problem, Point transformations,
Canonical forms, Lie point symmetries.
\section{Introduction}
The Lie symmetry classification \cite{Lie second ode} over the complex domain identifies four distinct classes encompassing all scalar ordinary differential equations (ODEs) of order two,  $u^{\prime\prime} = f(x,u,u^{\prime})$, that admit a  Lie algebra of dimension three.

By adopting the \textit{direct method}, Ibragimov and Meleshko \cite{Ibragimov second ode, Meleshko second ode} provided an invariant classification of these equations by determining the canonical forms for each class and analyzed them extensively. They showed that any non-linearizable second-order ODE having a three-dimensional Lie symmetry algebra can be mapped, through point transformations, into one of these canonical forms.

In more recent studies, Al-Dweik et al. \cite{Sondos} applied the \textit{Cartan equivalence method} to construct explicitly three-dimensional invariant coframes corresponding to the three branches. They characterized these ODEs geometrically and a procedure was developed for deriving the corresponding point transformations.

Abuloha et al. \cite{abuloha2026} naturally extended the results in \cite{Sondos} to the equivalence problem of  third-order ODEs which are non-linearizable by point transformation and possess four-dimensional Lie point symmetry subalgebras.  A direct extension of the recent research is to study the equivalence problem for non-linearizable fourth-order ODEs that admit five-dimensional Lie symmetry subalgebras, considering transformations that are point and contact. These form two separate equivalence problems, as some ODEs that are non-linearizable by point transformation may become linearizable under contact transformation.

 Many studies in the literature provide a full characterization of scalar fourth-order ODEs that possess real Lie algebras $L_n $ of dimension $n$, where $n$ ranges from 1 to 8. Mahomed in \cite{F Mohamad1989} derived the realizations of three-dimensional Lie algebras in the real plane. Cerquetelli \cite{Cerquetelli2002} and Fatima \cite{ Fatima2013} provided a full list of scalar fourth‐order ODEs that have 4‐dimensional Lie algebras. In \cite{Shah2021,Shah2024}, Shah et. al derived a complete classification of scalar 4th-order ODEs that admit 5-, 6- and 8-dimensional Lie point symmetry algebras. In \cite{Shah2021}, Shah et. al, showed that there are no fourth-order differential equations possessing a seven-dimensional Lie algebra as its symmetry algebra.
 
 Classification of scalar fourth-order ODEs possessing real Lie point symmetry algebras $(m,n)$, where  $m$ is the class of the algebra and $n= 5, 6, 8$  is its dimension, is given in the Appendix I (see \cite{Shah2021}).
As indicated in Table $5$ of Appendix I, we infer that any non-linearizable fourth-order ODE  $u^{(4)} = f(x, u, u^{\prime}, u^{\prime \prime},u^{\prime \prime \prime})$ via point transformation having a five-dimensional
Lie point symmetry subalgebra falls under twelve different types listed in Table $1$. Throughout this paper, we denote $ u^{\prime}, u^{\prime \prime},u^{\prime \prime \prime}$ by $p, ~q, ~r$.
\begin{table}[H]
\begin{adjustbox}{width=\columnwidth,center}
\begin{tabular}{|c|c|c|}
\hline Algebra Type & Generators & The corresponding fourth-order equations \\
\hline $(24,5), \alpha=0$ & $\partial_x, \partial_u, x \partial_x+\alpha u \partial_u, x \partial_u, x^2 \partial_u$ & $u^{(4)}={ r}^ {\frac{4}{3}}$ \\
\hline $(24,5), \alpha=b+1$ & $\partial_x, \partial_u, x \partial_x+\alpha u \partial_u, x \partial_u, x^2 \partial_u$ & $u^{(4)}= r^{\frac{b-3}{b-2}},~b\neq 1,2,3$\\
\hline $(24,5), \alpha=2$ & $\partial_x, \partial_u, x \partial_x+\alpha u \partial_u, x \partial_u, x^2 \partial_u$ & $u^{(4)}= r^{2}$ \\
\hline $(24,5), \alpha=1$ & $\partial_x, \partial_u, x \partial_x+\alpha u \partial_u, x \partial_u, x^2 \partial_u$ & $u^{(4)}= r^{ \frac{3}{2}}$ \\
\hline $(24,5), \alpha=\frac{b+1}{b}$ & $\partial_x, \partial_u, x \partial_x+\alpha u \partial_u, x \partial_u, x^2 \partial_u$ & $u^{(4)}= r^{\frac{1-3 b}{1-2 b}},b\neq 0,\frac{1}{3},\frac{1}{2},1 $ \\
\hline $(5,5)$ & $\partial_x, \partial_u, x \partial_x-u \partial_u, u \partial_x, x \partial_u$ & $u^{(4)}=\frac{5}{3} \frac{r^2}{q}+ q^{ \frac{5}{3}}$ \\
\hline $(27,5),r=1$ & $\partial_x, \partial_u, x \partial_u,2x \partial_x+u\partial u, x^2 \partial_x+xu\partial u$ & $u^{(4)}=\frac{4}{3} \frac{r^2}{q}+ q^{ \frac{7}{3}}$ \\
\hline $(15,5)$ & $\partial_x, \partial_u, x \partial_x, u \partial_u, x^2 \partial_x$ & $
u^{(4)}=\frac{6 q r}{p}-\frac{6 q^ 3}{p^ 2}+K \frac{\left(3q^2-2pr\right)^{\frac{3}{2}}}{p^ 2}
$\\
\hline $(25,5), r=3$ & $\partial_x, \partial_u, x \partial_u, x^2 \partial_u, x \partial_x+\left(3 u+x^3\right) \partial_u$ & $
u^{(4)}= e^{r}
$ \\
\hline $(26,5), r=1$ & $\partial_x, \partial_u, x \partial_u, x \partial_x, u \partial_u$ & $
u^{(4)}=K \frac{r^ 2}{q},~K \neq 0,\frac{4}{3},\frac{5}{3}$ \\
\hline $(6,6)$ & $\partial_x, \partial_u, x \partial_x, u \partial_x, x \partial_u, u \partial_u$ & $
u^{(4)}=\frac{5}{3} \frac{r^ 2}{q}
$ \\
\hline $(28,6),r=1$ & $\partial_x, \partial_u, x \partial_x,x \partial_u, u \partial_u, x^2 \partial_x+xu\partial u$ & $
u^{(4)}=\frac{4}{3} \frac{r^ 2}{q}
$ \\
\hline
\end{tabular}
\end{adjustbox}
\caption{Canonical forms of non-linearizable fourth-order ODEs under point transformation admitting
five-dimensional Lie point symmetry subalgebras. }
\end{table}
 
 \begin{remark} In Table 1, we omit all linear canonical forms listed in Table
$4$ of the Appendix I, as our focus is on non-linearizable fourth-order ODEs.
\end{remark}

\begin{remark}
 \begin{itemize}
 In Table~1, we leave out $K$ in the following  representative equations
 \item $(24,5)$, $\alpha = 0$, 
listed in Table~5 of Appendix I, where it is noted that the transformation 
$\bar{x} = x,\; \bar{u} = \frac{1}{K^3}u$ 
maps the canonical form 
$\bar{u}^{(4)} = K\bar{u}'''^{\frac{4}{3}}$ 
to 
$u^{(4)} = u'''^{\frac{4}{3}}$.

\item $(24,5)$, $\alpha = b+1$, 
listed in Table $5$ of Appendix I, where it is observed that the transformation 
$\bar{x} = \frac{1}{K}x,\; \bar{u} = \frac{1}{K^3}u$ 
maps the canonical form 
$\bar{u}^{(4)} = K\bar{u}'''^{\frac{b-3}{b-2}}$ 
to 
$u^{(4)} = u'''^{\frac{b-3}{b-2}}$.
\item  $(24,5)$, $\alpha = 2$, 
listed in Table $5$ of Appendix I, where it is noted that the transformation 
$\bar{x} = x,\; \bar{u} = \frac{1}{K}u$ 
maps the canonical form 
$\bar{u}^{(4)}= K\bar{u}'''^{2}$ 
to 
$u^{(4)} = u'''^{2}$.
\item  $(24,5)$, $\alpha = 1$, 
listed in Table $5$ of Appendix I, where it is seen that the transformation 
$\bar{x} =x,\; \bar{u} = \frac{1}{K^2}u$ 
maps the canonical form 
$\bar{u}^{(4)} = K\bar{u}'''^{\frac{3}{2}}$ 
to 
$u^{(4)} = u'''^{\frac{3}{2}}$.

\item  $(24,5)$, $\alpha = \frac{b+1}{b}$, 
listed in Table $5$ of Appendix I, where it is noted that the transformation 
$\bar{x} = K^2x,\; \bar{u} = K^{\frac{3}{b}}u$ 
maps the canonical form 
$\bar{u}^{(4)} = K\bar{u}'''^{\frac{1-3b}{1-2b}}$ 
to 
$u^{(4)} = u'''^{\frac{1-3b}{1-2b}}$.
\item  $(24,5)$, $r = 3$, 
listed in Table $5$ of Appendix I, where it is observed that the transformation 
$\bar{x} = \frac{-6}{K}x,\; \bar{u} = \frac{1296}{K^3}u$ 
maps the canonical form 
$\bar{u}^{(4)}= Ke^{-\bar{u}^{\prime \prime \prime}/6}$ 
to 
$u^{(4)} = e^{u'''}$.
\item $(5,5)$,  
listed in Table $5$ of Appendix I, where it is noticed that the transformation 
$\bar{x} =x,\; \bar{u} = K^{-\frac{3}{2}}u$ 
maps the canonical form 
$\bar{u}^{(4)} =\frac{5}{3} \frac{\bar{u}^{\prime \prime \prime2}}{\bar{u}^{\prime \prime}}+K\bar{ u}^{\prime \prime \frac{5}{3}} $ 
to 
$u^{(4)} = \frac{5}{3} \frac{u^{\prime \prime \prime2}}{u^{\prime \prime}}+ u^{\prime \prime \frac{5}{3}}$.
\item $(27,5),r=1$,  
listed in Table $5$ of Appendix I, where it is seen that the transformation 
$\bar{x} = x,\; \bar{u} = K^{-\frac{3}{4}}u$ 
maps the canonical form 
$\bar{u}^{(4)} =\frac{4}{3} \frac{\bar{u}^{\prime \prime \prime2}}{\bar{u}^{\prime \prime}}+K\bar{ u}^{\prime \prime \frac{7}{3}} $ 
to 
$u^{(4)} = \frac{4}{3} \frac{u^{\prime \prime \prime2}}{u^{\prime \prime}}+ u^{\prime \prime \frac{7}{3}}$.

\end{itemize}
\end{remark}

 The objective of this work is to present an invariant characterization of non-linearizable fourth-order ODEs admitting a five-dimensional Lie point symmetry subalgebra, as listed in Table 1.
  
A principal challenge in this work lies in the implementation of Cartan’s Equivalence Method, primarily due to the well-known issue of expression swell. In this context, the associated coframes become highly intricate, rendering the computational process extremely demanding. 
To address this challenge, we  employ the \textit{Inductive Cartan Equivalence Method}. This method significantly simplifies the resulting expressions. Despite this simplification, the computations still remain complicated. Consequently, we further introduce a novel framework for Cartan’s method based on a branching strategy via differential relative invariants. This is complemented by introducing a chain of auxiliary functions to systematically manage and simplify the resulting expressions.

This article is arranged as follows. In the following section, the Inductive Cartan’s equivalence method is applied to scalar fourth-order ODEs via the action of the Lie group of point transformations, employed to obtain four invariant coframes associated with four distinct branches. These coframes are then utilised to characterize all fourth-order ODEs presented in Table $1$. Section $3$ outlines the main theorems. Section $4$ outlines a procedure for constructing corresponding point transformations from the constructed invariant coframes, illustrative examples are presented to demonstrate the principal theorems and the paper concludes with a summary.

\section{Implementation of Cartan’s equivalence method}
\setcounter{equation}{0}
We refer the interested reader to the scholarly works \cite{Olver1995,Neut2003} for necessary definitions, notation and standard ideas employed in this section. Throughout this paper, the 1-forms $\pi_i,~i=1,...,12$ denote the modified Mourer-Cartan forms derived from the general solution of the linear absorption system.

Let $\mathbf{J}^3$ denote the third-order jet space equipped with local coordinates $\left(x, u, p=u', q=u'', r=u'''\right) \in \mathbb{R}^5$. The following defines an adapted coframe on $M=\mathbf{J}^3$.
\begin{equation}\label{2.1}
\left(\begin{array}{l}
\omega^1\\\omega^2\\\omega^3\\\omega^4\\\omega^5\\
\end{array}\right)
=\Omega
\left(\begin{array}{c}
du-pdx\\
dp-qdx\\
dq-rdx\\
dr-fdx\\
dx\\
\end{array}\right),
\end{equation}
where  $\Omega$ denotes the nonsingular transition matrix given by
\begin{equation}\label{2.2}
\Omega=\left(
\begin{array}{ccccc}
1&0&0&0&0\\
-\frac{1}{2}I_{0}&1&0&0&0\\
\frac{3}{10}(I_1+I_2)&-\frac{1}{2}I_0&1&0&0\\
0&\frac{1}{10}(-3I_1+7I_2)&0&1&0\\
0&0&0&0&1\\
\end{array}
\right),
\end{equation}
wherein $I_0,~I_1, I_2$ are functions of $x,u,p,q,r$ and their explicit values in terms of $f$ are
\begin{equation}\label{2.3}
\begin{aligned}
I_0=-f_r,\quad I_1=-\frac{1}{6}\hat{D}_xI_0+\frac{1}{4}I_0^2,\quad I_2=-\frac{1}{2}\hat{D}_xI_0-f_q, 
\end{aligned}
\end{equation}
 where $\hat{D}_x=\frac{\partial}{\partial x}+p\frac{\partial}{\partial u}+q\frac{\partial}{\partial p}+r\frac{\partial }{\partial q}+f\frac{\partial}{\partial r}$. The invertible matrix $\Omega$ can be obtained from the solution of the equivalence problem under fiber-preserving transformation as explained briefly in Appendix II.
 
In local coordinates, the equivalence  of
\begin{equation}\label{2.4}
\begin{array}{l}
u^{(4)}=f\left(x, u, u^{\prime},u^{\prime \prime},u^{\prime \prime\prime}\right), \quad \bar{u}^{(4)}=\bar{f}\left(\bar{x}, \bar{u}, \bar{u}^{\prime},\bar{u}^{\prime \prime},\bar{u}^{\prime \prime \prime}\right),
\end{array}
\end{equation}
by means of the point transformation
\begin{equation}\label{2.5}
\bar{x}=\varphi \left( x,u\right),~\bar{u} =\psi \left( x,u \right),\quad \phi_x\psi_u-\phi_u\psi_x \neq 0\\
\end{equation}relative to (\ref{2.1}), the following equivalence relations arise
\begin{equation}\label{2.6}
\Phi^*\left(\begin{array}{c}
\bar{\omega}^1 \\
\bar{\omega}^2 \\
\bar{\omega}^3\\
\bar{\omega}^4\\
\bar{\omega}^5
\end{array}\right)=\left(\begin{array}{ccccc}
a_1 & 0 & 0 &0 &0\\
a_2 & a_3 & 0 &0&0 \\
a_4 & a_5 & a_6& 0&0\\
a_7 & a_8&a_9&a_{10}&0\\
a_{11}&&0&0&a_{13}
\end{array}\right)\left(\begin{array}{c}
\omega^1 \\
\omega^2 \\
\omega^3\\
\omega^4\\
\omega^5
\end{array}\right),
\end{equation}
for functions $a_i(x,u,p,q,r),~ i=1,2 \dots, 11,13,$  where $\Phi^*$  is the pullback operator corresponding to the map arising from the third prolongation of the point transformation \eqref{2.5}. It follows that the resulting structure group is the twelve-dimensional Lie group 
\begin{equation}\label{2.7}
G=\left\{
\left(\begin{array}{ccccc}
a_1 & 0 & 0 &0&0\\
a_2 & a_3 & 0&0&0 \\
a_4 & a_5 & a_6&0&0\\
a_7&a_8&a_9&a_{10}&0\\
a_{11}&&0&0&a_{13}
\end{array}\right)  \Bigg \vert a_1a_3a_6a_{10}a_{13}\neq 0
\right\}.
\end{equation}
We demonstrate that by applying Cartan’s equivalence method to the equivalence problem between scalar fourth-order ODEs and each canonical form in Table 1 yields four invariant coframes of rank zero.

Now, let us define \begin{equation}\label{coframe}(\theta^1,~\theta^2,~\theta^3,~\theta^4,~\theta^5)^{T}=S(\omega^1,\omega^2,\omega^3,\omega^4,\omega^5)^{T}\end{equation} to be the lifted coframe, where $S$ belongs to $G$.

Once absorption is performed, the first structure equation reads
\begin{equation}\label{e2.9}
\begin{array}{ll}
d\left(\begin{array}{c}
\theta^1 \\
\theta^2 \\
\theta^3\\
\theta^4\\
\theta^5
\end{array}\right)=\left(\begin{array}{ccccc}
\pi^{\prime}_1 & 0 & 0 &0&0\\
\pi^{\prime}_2&  \pi^{\prime}_3 & 0&0&0 \\
\pi^{\prime}_4&  \pi^{\prime}_5&\pi^{\prime}_6&0&0\\
\pi^{\prime}_{7}&\pi^{\prime}_8&\pi^{\prime}_9&\pi^{\prime}_{10}&0\\
\pi^{\prime}_{11}&0&0&0&\pi^{\prime}_{12}
\end{array}\right) \wedge\left(\begin{array}{c}
\theta^1 \\
\theta^2 \\
\theta^3\\
\theta^4\\
\theta^5
\end{array}\right)+\left(\begin{array}{c}
T_{25}^1 ~\theta^2 \wedge \theta^5 \\
T_{35}^2 ~\theta^3\wedge\theta^5\\
T_{45}^3~ \theta^4 \wedge \theta^5 \\
0\\
0\\
\end{array}\right),
\end{array}
\end{equation}

The exact formulas for the essential torsion coefficients are\begin{equation}\label{2.10} T_{25}^1=-\frac{a_1}{a_3a_{13}},~T^{2}_{35}=-\frac{a_3}{a_6a_{13}},~ T^{3}_{45}=-\frac{a_6}{a_{10}a_{13}},\end{equation} that can be set to $-1$ via normalization of the group parameters\begin{equation}\label{2.11}\begin{aligned}a_3=\frac{a_1} {a_{13}}, ~a_6=\frac{a_1}{a_{13}^2},~ a_{10}=\frac{a_1}{a_{13}^3}.\end{aligned}\end{equation} Consequently, the structure group reduces to
\begin{equation}\label{2.12}
G_1=\left\{
\left(\begin{array}{ccccc}
a_1 & 0 & 0&0&0  \\
a_2 & \frac{a_1}{a_{13}} & 0 & 0&0 \\
a_4 & a_5 & \frac{a_1}{a_{13}^2} &0&0\\
a_7&a_8&a_9&\frac{a_1}{a_{13}^3}&0\\
a_{11}&0&0&0&a_{13}
\end{array}\right)
 \Bigg \vert a_1a_{13}\neq 0
\right\},
\end{equation}
with the lifted one-forms
\begin{equation}\label{2.13}
\left(\begin{array}{l}
\theta^1 \\
\theta^2 \\
\theta^3 \\
\theta^4\\
\theta^5
\end{array}\right)=\left(\begin{array}{ccccc}
a_1 & 0 & 0 & 0& 0 \\
a_2 & \frac{a_1}{a_{13}} & 0 &0&0 \\
a_4 & a_5 & \frac{a_1}{a_{13}^2} &0&0\\
a_7&a_8&a_9&\frac{a_1}{a_{13}^3}&0\\
a_{11}&0&0&0&a_{13}
\end{array}\right)\left(\begin{array}{c}
\omega^1 \\
\omega^2 \\
\omega^3\\
\omega^4\\
\omega^5
\end{array}\right) .
\end{equation}
Proceeding to the \textit{second iteration} of the reduction process, the absorbed structure equations read as follows
\begin{equation}\label{2.14}
d\left(\begin{array}{c}
\theta^1 \\
\theta^2 \\
\theta^3\\
\theta^4\\
\theta^5
\end{array}\right)=\left(\begin{array}{ccccc}
 \pi^{\prime}_1 & 0 & 0 &0&0\\
\pi^{\prime}_2 &  \pi^{\prime}_1-\pi^{\prime}_{9} & 0&0&0 \\
\pi^{\prime}_3 & \pi^{\prime}_4& \pi^{\prime}_{1}-2\pi^{\prime}_{9}&0&0\\
\pi^{\prime}_{ 5}&\pi^{\prime}_6&\pi^{\prime}_7&\pi^{\prime}_1-3\pi^{\prime}_{9}&0\\
\pi^{\prime}_8&0&0&0&\pi^{\prime}_{9}
\end{array}\right) \wedge\left(\begin{array}{c}
\theta^1 \\
\theta^2 \\
\theta^3\\
\theta^4\\
\theta^5
\end{array}\right)+\left(\begin{array}{c}
- \theta^2 \wedge \theta^5 \\
-\theta^3\wedge\theta^5 \\
T^{3}_{35}~\theta^3\wedge\theta^5-\theta^4\wedge\theta^5\\
T^{4}_{45}~\theta^4\wedge\theta^5\\
0
\end{array}\right),
\end{equation}
The exact formula for the essential torsion coefficients are

\begin{equation} T_{35}^3=\frac{a_9a^2_{13}-3a_5a_{13}+3a_2}{a_1},~  T^{4}_{45}=\frac{-a_9a_{13}^2-3a_5a_{13}+5a_2}{a_1}.
\end{equation}
This can can be set to $0$ via normalization of the group parameters
  \begin{equation*}\begin{aligned}
a_2=a_9a_{13}^2,\quad  a_5=\frac{4}{3}a_9a_{13}.\end{aligned}\end{equation*}
Therefore, the structure group is now given by $G_2$ by incorporating the values of $a_2$ and $a_5$ in $G_1$, that gives the adapted coframe \eqref{coframe} with $S \in G_2$. 

In the \textit{third iteration} of the reduction scheme, absorption leads the structure equations to become
\begin{equation}\label{2.17}
\begin{aligned}
d\!\left(\!\begin{array}{c}
\theta^1 \\[2pt]
\theta^2 \\[2pt]
\theta^3 \\[2pt]
\theta^4 \\[2pt]
\theta^5
\end{array}\!\right)
&=\setlength{\arraycolsep}{1pt}
\left(\!\begin{array}{ccccc}
\pi'_1 & 0 & 0 & 0 & 0\\
\pi'_2 & \pi'_1 - \pi'_7 & 0 & 0 & 0\\
\pi'_3 & \tfrac{4}{3}\pi'_2 & \pi'_1 - 2\pi'_7 & 0 & 0\\
\pi'_4 & \pi'_5 & \pi'_2 & \pi'_1 - 3\pi'_7 & 0\\
\pi'_6 & 0 & 0 & 0 & \pi'_7
\end{array}\!\right)
\wedge
\left(\!\begin{array}{c}
\theta^1 \\[2pt]
\theta^2 \\[2pt]
\theta^3 \\[2pt]
\theta^4 \\[2pt]
\theta^5
\end{array}\!\right)
+
\left(\!\begin{array}{c}
-\theta^2 \wedge \theta^5 \\[3pt]
-\theta^3 \wedge \theta^5 \\[3pt]
\sum\limits^5_{i=4}T^3_{2i}\,\theta^2 \wedge \theta^i - \theta^4 \wedge \theta^5 \\[3pt]
\sum\limits^5_{i=4}T^4_{3i}\,\theta^3 \wedge \theta^i \\[3pt]
T^5_{25}~\theta^2\wedge\theta^5
\end{array}\!\right),
\end{aligned}
\end{equation}

where 
\begin{equation}\label{2.18}
\begin{aligned}
T_{24}^3 &= \frac{1}{3}T_{34}^4 = \frac{a_{13}^2}{a_1} I_4,
&\qquad
T_{25}^3 &= \frac{a_8 a_{13}}{a_1}-\frac{7}{3}\frac{a_4}{a_1}
          +\frac{8}{9}\frac{a_9^2 a_{13}^4}{a_1^2}, \\[6pt]
T_{35}^4 &= -\frac{a_8 a_{13}}{a_1}-\frac{a_4}{a_1}
          +\frac{4}{3}\frac{a_9^2 a_{13}^4}{a_1^2},
&\qquad
T_{25}^5 &= -\frac{5}{6}\frac{a_{11}}{a_1}
          +\frac{a_{13}}{a_1} I_3
          -\frac{1}{2}\frac{a_9 a_{13}^4}{a_1^2} I_4 ,
\end{aligned}
\end{equation}

and \begin{equation}
I_ 3=-\frac{1}{8} {I _0}_r I_ 0+\frac{1}{4} {I_ 1}_r-\frac{1}{12} {I_ 2}_r,\quad I _4=-\frac{1}{6} {I_ 0}_r.
\end{equation}

When a factor multiplying an absolute invariant does not rely on the group parameters, it gives rise to a relative invariant. Under a point transformation, such a quantity is mapped to a constant scalar of itself. In particular, expressions that vanish prior to the transformation continue to vanish thereafter.

Clearly $I_4$ is a relative invariant so we have two branches.

\subsection{Branch $I_4 =0$.}
The canonical form $(15,5),~K=0$ is the only canonical form in Table $1 $ that belongs to this branch, so we can translate $T^3_{25},~T^{4}_{35},~T^5_{25}$ to $0$ by normalizing 
\begin{equation}\begin{aligned}
a_4=\frac{2}{3}\frac{a_9^2a_{13}^4}{a_1},\quad a_8=\frac{2}{3}\frac{a_9^2a_{13}^3}{a_1},\quad a_{11}=\frac{6}{5}a_{13}I_3.
\end{aligned}\end{equation}
Hence, the structure group is now given by  $G_3$ by incorporating the values of $a_4,~a_8, ~a_{11}$ in $G_2$, that yields the adapted coframe \eqref{coframe} with $S \in G_3$.

In the \textit{fourth iteration} of the reduction scheme, absorption leads to the structure equations becoming
 \begin{equation}\label{I44}
\begin{aligned}
d\!\left(\!\begin{array}{c}
\theta^1 \\[2pt]
\theta^2 \\[2pt]
\theta^3 \\[2pt]
\theta^4 \\[2pt]
\theta^5
\end{array}\!\right)
&=\setlength{\arraycolsep}{1pt}
\left(\!\begin{array}{ccccc}
\pi'_1 & 0 & 0 & 0 & 0\\
\pi'_2 & \pi'_1-\pi'_4 & 0 & 0 & 0\\
0 & \tfrac{4}{3}\pi'_2 & \pi'_1-2\pi'_4 & 0 & 0\\
\pi'_3 & 0 & \pi'_2 & \pi'_1-3\pi'_4 & 0\\
0& 0 & 0 & 0 & \pi'_4
\end{array}\!\right)
\wedge
\left(\!\begin{array}{c}
\theta^1 \\[2pt]
\theta^2 \\[2pt]
\theta^3 \\[2pt]
\theta^4 \\[2pt]
\theta^5
\end{array}\!\right)
+\setlength{\arraycolsep}{1pt}
\left(\!\begin{array}{c}
- \theta^2 \wedge \theta^5 \\[3pt]
-\theta^3 \wedge \theta^5 \\[3pt]
T^{3}_{15}~\theta^1 \wedge \theta^5-\theta^4 \wedge \theta^5\\[3pt]
 T^4_{25}\theta^2\wedge \theta^5\\[3pt]
 \sum\limits^3_{i=2}T^5_{1i}\,\theta^1 \wedge \theta^i+T^5_{15}\, \theta^1 \wedge \theta^5 
\end{array}\!\right).
\end{aligned}
\end{equation}

The explicit values for $T^3_{15},~T^4_{25},~ T^5_{12},~T^5_{13},~T^5_{15}$ can be given as 

 \begin{equation}
 \begin{aligned}
 T^3_{15}&=\frac{I_7}{a_{13}^3}+\frac{a_7}{a_1}-\frac{2}{9}\frac{a_9^3a_{13}^6}{a_{1}^3},~~\quad T^4_{25}=-\frac{I_7}{a_{13}^3}-\frac{a_7}{a_1}+\frac{2}{9}\frac{a_9^3a_{13}^6}{a_{1}^3}+\frac{I_8}{a_{13}^3}\\
 T^5_{12}&=\frac{a_{13}^2I_6}{a_{1}^2}-\frac{4}{3}\frac{a_9a_{13}^5I_9}{a_1^3},\qquad T^5_{13}=\frac{a_{13}^3I_9}{a_1^2},\qquad T^5_{15}=\frac{I_5}{a_1},
 \end{aligned}
 \end{equation}
where 
\begin{equation}\begin{aligned} 
I_5&=-\frac{18}{5}I_0I_3-\frac{3}{5}{I_0}_p+\frac{2}{5}(I_2-3I_1)_q,\quad I_6=-\frac{36}{25}I_3^2-\frac{3}{5}I_0{I_3}_q-\frac{6}{5}{I_3}_p,\\
I_7&=\frac{1}{2}I_0^3-\frac{3}{10}(\hat{D}_x(I_1+I_2)-f(I_1+I_2)_r)+\frac{6}{5}fI_3+\frac{1}{20}I_0(I_2-39I_1),\\
I_8&=\frac{7}{8}I_0^3-(\hat{D}_xI_2-{I_2}_rf)-f_p+6fI_3+I_0(-3I_1-\frac{1}{2}I_2),\quad I_9=-\frac{6}{5}{I_3}_q.
\end{aligned}
\end{equation}
 
It is noted that $T^3_{15}+T^4_{25}=\frac{I_8}{a_{13}^3}$, which means that $I_8$ is a relative invariant. For the canonical form $u^{(4)}=\frac{6pr}{p}-\frac{6q^3}{p^2}$ we need to go through the sub-branch $I_5=0,~I_8=0,~I_9=0, ~I_6 \neq 0 $, so that we can translate $T^4_{25}$ to $0$ and $T^5_{12}$ to $1$ by normalizing \begin{equation}a_1=a_{13}J_6,\quad a_7=\frac{2}{9}\frac{a_9^3a_{13}^4}{J_6^2}-\frac{J_6I_7}{a_{13}^2},\end{equation}
 where $J^2_6=I_6.$
 
 In the \textit{fifth iteration} of the reduction scheme, absorption leads the structure equations to become, \begin{equation}\label{I4}
\begin{aligned}
d\!\left(\!\begin{array}{c}
\theta^1 \\[2pt]
\theta^2 \\[2pt]
\theta^3 \\[2pt]
\theta^4 \\[2pt]
\theta^5
\end{array}\!\right)
&=\setlength{\arraycolsep}{0.5pt}
\left(\!\begin{array}{ccccc}
\pi'_1 & 0 & 0 & 0 & 0\\
\pi'_2 & 0 & 0 & 0 & 0\\
0 & \tfrac{4}{3}\pi'_2 & -\pi'_1 & 0 & 0\\
0 & 0 & \pi'_2 & -2\pi'_1 & 0\\
0& 0 & 0 & 0 & \pi'_1
\end{array}\!\right)
\wedge
\left(\!\begin{array}{c}
\theta^1 \\[2pt]
\theta^2 \\[2pt]
\theta^3 \\[2pt]
\theta^4 \\[2pt]
\theta^5
\end{array}\!\right)
+\setlength{\arraycolsep}{0.5pt}
\left(\!\begin{array}{c}
- \theta^2 \wedge \theta^5 \\[3pt]
T^2_{25}\,\theta^2 \wedge \theta^5-\theta^3 \wedge \theta^5 \\[3pt]
T^3_{23}\,\theta^2\wedge \theta^3+T^3_{35}\,\theta^3\wedge \theta^5-\theta^4 \wedge \theta^5\\[3pt]
 \sum\limits^5_{i=2}T^4_{1i}\,\theta^1 \wedge \theta^i+\sum\limits^4_{j=3}T^4_{2j}\,\theta^2 \wedge \theta^j+ T^4_{45}\,\theta^4 \wedge \theta^5\\[3pt]
 \sum\limits^2_{i=1}T^5_{i5}\,\theta^i \wedge \theta^5+ \theta^1 \wedge \theta^2 
\end{array}\!\right).
\end{aligned}
\end{equation}
 The explicit values of $T^4_{15},~ T^4_{45}$ are given by
 \begin{equation}
\begin{aligned}
T^4_{45} &= \frac{I_{11}+a_9a_{13}^2}{a_{13}J_6}, 
&\qquad T^4_{15} &= \frac{I_{10}}{a_{13}^4},\\
\end{aligned}
\end{equation}
where  
\begin{equation}
 \begin{aligned}
 I_{10}&=\hat{D}_xI_7-f_u+I_0I_7+(I_1+I_2)(\frac{9}{100}(9I_1-I_2)-\frac{9}{40}I_0^2),\\
 I_{11}&=-3 {J_6}_x-3p{J_6}_u-3q{J_6}_p.
 \end{aligned}
 \end{equation}
For the canonical form $u^{(4)}= \frac{6pr}{p}-\frac{6q^3}{p^2},$ we require to go through the sub-branch $I_{10} \neq 0 $, so we can translate $T^4_{45}$ to $0$ and $T^4_{15}$ to $1$ by normalizing \begin{equation}a_9=-\frac{I_{11}}{J_{10}^2},\quad a_{13}=J_{10},\end{equation}
 where $J^4_{10}=I_{10}.$
 As a result, on space $M$, the invariant coframe is given below 
 \begin{equation}\label{InvariantI4}
 \begin{pmatrix}
 \theta^{1}\\
 \theta^{2}\\
 \theta^{3}\\ 
 \theta^4\\ 
 \theta^5\\
 \end{pmatrix}= \left(\begin{array}{ccccc} 
J_6J_{10}& 0 & 0&0&0 \\ -I_{11} &J_6 & 0&0&0 \\ \frac{2}{3}\frac{{I_{11}^2}}{J_6J_{10}}&- \frac{4}{3}\frac{I_{11}}{J_{10}} & \frac{J_6}{J_{10}}&0&0\\ \frac{-2I_{11}^3-9J_6^3I_7}{9J_6^2J_{10}^2}&\frac{2I_{11}^2}{3J_6J_{10}^2}&-\frac{I_{11}}{J_{10}^2}&\frac{J_6}{J_{10}^2}&0\\
 \frac{6}{5}I_3J_{10}&0&0&0&J_{10} \end{array}\right) 
 \begin{pmatrix} 
\omega^{1}\\
\omega^{2}\\
\omega^{3}\\ \omega^4\\
\omega^5\\
\end{pmatrix}. \end{equation} 

For the canonical form $
u^{(4)} = \frac{6pr}{p} - \frac{6q^3}{p^2},$ the structure equations associated with this invariant coframe on $M=\mathbf{J}^3$ exhibit constant structure functions.
 
Theorem \ref{thm0} follows from \cite[Theorem 8.15, page 268]{Olver1995} and is given in the subsequent section. 
 
\subsection{Branch $I_4 \neq 0$.}
In this branch, we can translate $T^3_{25},~T^4_{35},~T^5_{25} $ to $0$ and $T^3_{24}$ to $1$ by normalizing the group parameters \begin{equation}a_1=a_{13}^2I_4,\quad a_4=\frac{2}{3}\frac{a_9^2a_{13}^2}{I_4},\quad a_8=\frac{2}{3}\frac{a_9^2a_{13}}{I_4},\quad a_{11}=-\frac{3}{5}a_9a_{13}^2+\frac{6}{5}a_{13}I_3.\end{equation}

Thus, the structure group is now given by  $G_3$ by incorporating the values of $a_1,~a_4,~a_8, ~a_{11}$ in $G_2$, that results in the adapted coframe \eqref{coframe} with $S \in G_3$.

In the \textit{fourth iteration} of the reduction scheme, absorption leads the structure equations to become,
\begin{equation}\label{2.18}\setlength{\arraycolsep}{0.5pt}
\begin{aligned}
d\!\left(\!\begin{array}{c}
\theta^1 \\[2pt]
\theta^2 \\[2pt]
\theta^3 \\[2pt]
\theta^4 \\[2pt]
\theta^5
\end{array}\!\right)
&=\setlength{\arraycolsep}{0.5pt}
\left(\!\begin{array}{ccccc}
2\pi'_1 & 0 & 0 & 0 & 0\\
\pi'_2 & \pi'_1 & 0 & 0 & 0\\
0 & \tfrac{4}{3}\pi'_2 &0 & 0 & 0\\
\pi'_3 & 0 & \pi'_2 & \pi'_1 & 0\\
-\frac{3}{5}\pi'_2 & 0 & 0 & 0 & \pi'_1
\end{array}\!\right)
\wedge
\left(\!\begin{array}{c}
\theta^1 \\[2pt]
\theta^2 \\[2pt]
\theta^3 \\[2pt]
\theta^4 \\[2pt]
\theta^5
\end{array}\!\right)
+\setlength{\arraycolsep}{0pt}
\left(\!\begin{array}{c}
- \theta^2 \wedge \theta^5 \\[3pt]
\sum\limits^5_{i=3}T^2_{2i}\,\theta^2 \wedge \theta^i-\theta^3 \wedge \theta^5 \\[3pt]
\sum\limits^5_{i=3}T^3_{1i}\,\theta^1 \wedge \theta^i + T^{3}_{23}\,\theta^2\wedge \theta^3+\sum\limits^5_{j=4}T^3_{3j}\, \theta^3\wedge\theta^j +(\theta^2 +\theta^5)\wedge \theta^4\\[3pt]
\sum\limits^5_{i=4}T^4_{2i}\,\theta^2 \wedge \theta^i +T^4_{34}\theta^3\wedge \theta^4+T^4_{45}\theta^4\wedge \theta^5\\[3pt]
\sum\limits^5_{i=2}T^5_{1i}\,\theta^2 \wedge \theta^i+\sum\limits^4_{j=2}T^5_{j5}\,\theta^2 \wedge \theta^i 
\end{array}\!\right).
\end{aligned}
\end{equation}
The explicit values for $T^2_{24},~T^2_{25},~ T^3_{15}$ are given by

\begin{equation}\label{T}
\begin{array}{rcl@{\hskip 2em}rcl}
T^{2}_{24} =a_{13}I_5, \quad
T^2_{25}=\frac{1}{6}\frac{I_6}{a_{13}I_4}-\frac{1}{6}\frac{a_9}{I_4},\quad
T^{3}_{15} =\frac{I_7}{a_{13}^3}+\frac{a_7}{a_{13}^2I_4}-\frac{2}{9}\frac{a_9^3}{I_4^3},
\end{array}
\end{equation}
where 
\begin{equation}
\begin{aligned}\label{2.19}
I_5&=- \frac{1}{2}\frac{{I_4}_r}{I_4^2},
~I_6=-\frac{3}{2} I _0 I_ 4-{I_ 2}_r-6 I _3,\\
I_7&=\frac{1}{10}f\left(9I_0I_4+(-I_2+3I_1)_r-12I_3\right)+\frac{1}{2}I_0(I_0^2+\frac{1}{10}I_2)-\frac{3}{10}\hat{D}_x(I_1+I_2)-\frac{39}{20}I_0I_1.
\end{aligned}
\end{equation}
We can normalize $T^2_{25},~ T^3_{15} $ to zero by normalizing \begin{equation}\begin{aligned}a_7=-\frac{I_4I_7}{a_{13}}+\frac{2}{9}\frac{I_6^3}{a_{13}I_4^2},\quad a_{9}=\frac{I_6}{a_{13}}.\end{aligned}\end{equation}
Incorporating $a_7,\, a_9 $ in $G_3$ yields  $G_4$ as 
 $$ G_4=\left\{\left(\begin{array}{ccccc}
a_{13}^2I_4 & 0 & 0&0&0  \\
a_{13} I_{6}& a_{13}I_4 &0&0 & 0 \\
\frac{2}{3}\frac{I_6^2}{I_4}&\frac{4}{3}I_6&I_4&0&0\\
-\frac{I_4I_7}{a_{13}}+\frac{2}{9}\frac{I_6^3}{a_{13}I_4^2}&\frac{2}{3}\frac{I_6^2}{a_{13}I_4}&\frac{I_{6}}{a_{13}}&\frac{I_4}{a_{13}}&0\\
-\frac{3}{5}a_{13}I_6+\frac{6}{5}a_{13}I_3&0&0&0&a_{13}\\
\end{array}\right)\Bigg \vert a_{13}\neq 0
\right\}.$$
 It can also be observed that $I_{5}$ is a relative invariant. Consequently, we obtain the following two sub-branches.
 
\subsubsection{Sub-branch $I_5\neq 0$.}
In this sub-branch, we can translate  $T_{24}^{2}$ in equation \eqref{T} to $1$  by normalizing 
\begin{equation}
\begin{aligned}\label{2.20}
a_{13}&=\frac{1}{I_5}.
\end{aligned}
\end{equation}
That gives on space $M$ the following invariant coframe.
\begin{equation}\label{2.21}
 \begin{pmatrix}
 \theta^{1}\\
 \theta^{2}\\
 \theta^{3}\\ 
 \theta^4\\ 
 \theta^5\\
 \end{pmatrix}= \left(\begin{array}{ccccc} 
 \frac{I_4}{I_5^2} & 0 & 0&0&0 \\ \frac{I_{6}}{I_5} & \frac{I_4}{I_5} & 0&0&0 \\ \frac{2}{3}\frac{{I_6^2}}{I_4}& \frac{4}{3}I_6 & I_4&0&0\\ -I_4I_5I_7+\frac{2}{9}\frac{I_5I_6^3}{I_4^2}&\frac{2}{3}\frac{I_5I_6^2}{I_4}&I_5I_6&I_4I_5&0\\
 -\frac{3}{5}\frac{I_6}{I_5}+\frac{6}{5}\frac{I_3}{I_5}&0&0&0&\frac{1}{I_5} \end{array}\right) 
 \begin{pmatrix} 
\omega^{1}\\
\omega^{2}\\
\omega^{3}\\ \omega^4\\
\omega^5\\
\end{pmatrix}. \end{equation}

Upon evaluation of the following six canonical forms corresponding to this branch, the invariant coframe on $M=\mathbf{J}^3$ possesses structure equations with constant structure functions.
\begin{itemize}
\item The canonical form $ u^{(4)}=e^{r}$.
\item The canonical form $u^{(4)}=r^{\frac{3}{2}}$.
\item The canonical form $u^{(4)}=r^{\frac{4}{3}}$.
\item The canonical form $u^{(4)}=r^{\frac{b-3}{b-2}},~ b \neq 1,2,3$.
\item The canonical form $u^{(4)} = r^{\frac{1 - 3b}{1 - 2b}},~b \neq 0,\frac{1}{3}, \frac{1}{2},1$.
\item The canonical form 
$u^{(4)} = \frac{6 q r}{p} - \frac{6 q^3}{p^2} 
+ K \frac{ \left( 3q^2 - 2 p r \right)^{3/2}}{p^2}, 
\quad K \neq 0.$
\end{itemize}
Theorem \ref{thm1} follows from \cite[Theorem 8.15, page 268]{Olver1995} and is provided in the subsequent section.

\subsubsection{Sub-branch $I_5= 0.$}
In this sub-branch, $a_{13}$ is the only parameter left. In the \textit{fifth iteration} of the reduction scheme, absorption leads the structure equations to become

\begin{equation}\label{st fifth}
\begin{aligned}
d
\begin{pmatrix}
\theta^1\\[0.3ex]
\theta^2\\[0.3ex]
\theta^3\\[0.3ex]
\theta^4\\[0.3ex]
\theta^5
\end{pmatrix}
&=\setlength{\arraycolsep}{0.5pt}
\begin{pmatrix}
2\pi'_1 & 0      & 0      & 0       & 0\\
0       & \pi'_1& 0      & 0       & 0\\
0       & 0      & 0     & 0       & 0\\
0       & 0      & 0      & -\pi'_1 & 0\\
0       & 0      & 0      & 0       & \pi'_1
\end{pmatrix}
\wedge
\begin{pmatrix}
\theta^1\\
\theta^2\\
\theta^3\\
\theta^4\\
\theta^5
\end{pmatrix}
+\setlength{\arraycolsep}{0.5pt}
\left(
\begin{array}{c}
-\theta^{2}\wedge\theta^{5}
\\
\sum\limits^5_{i=3}T^2_{1i}\,\theta^1\wedge\theta^i+T^2_{23}\,\theta^2\wedge \theta^3+T^2_{25}\,\theta^2\wedge \theta^5-\theta^3\wedge \theta^5
\\
\sum\limits^5_{i=2}T^3_{1i}\,\theta^1\wedge \theta^i+\sum\limits^5_{j=3}T^3_{2j}\,\theta^2\wedge \theta^j+T^3_{35}\,\theta^3\wedge \theta^5-\theta^{4}\wedge\theta^{5}
\\
\sum\limits^5_{i=2}T^4_{1i}\,\theta^1\wedge \theta^i+\sum\limits^5_{j=3}T^4_{2j}\,\theta^2\wedge \theta^j+\sum\limits^5_{k=4}T^4_{3k}\,\theta^3\wedge \theta^k+T^4_{45}\,\theta^4\wedge \theta^5\\
\sum\limits^5_{i=2}T^5_{1i}\,\theta^1\wedge \theta^i+\sum\limits^3_{j=2}T^5_{j5}\,\theta^j\wedge \theta^5\\
\end{array}
\right).
\end{aligned}
\end{equation}
The explicit  value of the essential torsion  $T^2_{15} $ is \begin{equation}\begin{aligned}T^2_{15}=\frac{I_8}{a_{13}^2},\end{aligned}\end{equation} 
where $I_8=\frac{I_6}{I_4^2}\hat{D}_x(I_4-{I_4}_rf)-\frac{1}{I_4}\hat{D}_xI_6+\frac{3}{4}I_0^2-\frac{1}{2}\frac{I_0I_6}{I_4}+\frac{3}{10}I_2+\frac{1}{3}\frac{I_6^2}{I_4^2}-\frac{27}{10}I_1$.
 Clearly $I_8$ is a relative invariant and hence we have the following two 
 sub-branches.
  
 \subsubsubsection{Sub-branch $I_8 \neq 0$} 
In this sub-branch we can normalize $T^2_{15}=1$, by setting $a_{13}=J_8$, where $J_8^2=I_8$. This gives the invariant coframe as given below

\begin{equation}\label{2.20}
\left(
\begin{array}{c}\\
\theta^{1}\\
\theta^{2}\\
\theta^{3}\\
\theta^{4}\\
\theta^{5}\\\\
\end{array}
\right)
=
\left(\begin{array}{ccccc}
I_4J_8^2 & 0 & 0 & 0 & 0 \\[0.8em]
I_6J_8 & I_4J_8 & 0 & 0 & 0 \\[0.8em]
\frac{2}{3} \frac{I_6^2}{I_4} & \frac{4}{3}I_6 &I_4 & 0 & 0 \\[0.8em]
-\frac{I_4I_7}{J_8}+\frac{2}{9}\frac{I_6^3}{I_4^2J_8} &\frac{2}{3}\frac{I_6^2}{I_4J_8} &\frac{I_6}{J_8} & \frac{I_4}{J_8}&0 \\[0.8em]
-\frac{3}{5}I_6J_8+\frac{6}{5}I_3J_8 &0 &0 & 0 &J_8
\end{array}\right)
\left(
\begin{array}{c}\\
\omega^{1}\\
\omega^{2}\\
\omega^{3}\\
\omega^{4}\\
\omega^{5}\\\\
\end{array}
\right).
\end{equation}
Upon evaluation at the following four canonical forms corresponding to this branch, the invariant coframe on $M=\mathbf{J}^3$ possesses structure equations with constant structure functions.
\begin{itemize}
\item The canonical form $u^{(4)}=r^2.$
\item The canonical form $u^{(4)}=\frac{Kr^2}{q}, K \neq 0, \frac{4}{3},~\frac{5}{3}.$
\item The canonical form $u^{(4)}=\frac{5}{3}\frac{r^2}{q}+q^{\frac{5}{3}}.$
\item The canonical form $u^{(4)}=\frac{4}{3}\frac{r^2}{q}+q^{\frac{7}{3}}.$
\end{itemize}
Theorem \ref{thm2} follows from \cite[Theorem 8.15, page 268]{Olver1995} and is given in the subsequent section.

\subsubsubsection{Sub-branch $I_8 =0$.}
  In this sub-branch all relative invariants are independent of the group parameter $a_{13}$ once evaluated at the canonical forms $(6,6)$ and $((28,6),r=1),$ so the group parameter $a_{13}$ cannot be normalized. In addition, $\pi'_1$ is now uniquely defined and hence the problem becomes determinant. Consequently, the following $e$-structure is obtained on the six-dimensional prolonged space. $M^{(1)} = M\times G_4$ 
\begin{equation}\label{2.99}
\left(\begin{array}{l}
\theta^1 \\
\theta^2 \\
\theta^3 \\
\theta^4\\
\theta^5\\
\pi_1'
\end{array}\right)=
\left(\begin{array}{cccccc}
a_{13}^2I_4 & 0 & 0&0&0&0  \\
a_{13} I_{6}& a_{13}I_4 &0&0 & 0 &0\\
\frac{2}{3}\frac{I_6^2}{I_4}&\frac{4}{3}I_6&I_4&0&0&0\\
-\frac{I_4I_7}{a_{13}}+\frac{2}{9}\frac{I_6^3}{a_{13}I_4^2}&\frac{2}{3}\frac{I_6^2}{a_{13}I_4}&\frac{I_{6}}{a_{13}}&\frac{I_4}{a_{13}}&0&0\\
-\frac{3}{5}a_{13}I_6+\frac{6}{5}a_{13}I_3&0&0&0&a_{13}&0\\
I_9&I_{10}&\frac{-9I_4^2-{I_6}_r}{6I_4}&0&\frac{3I_0I_4-4I_6}{6I_4}&\frac{1}{a_{13}}
\end{array}\right)
\left(\begin{array}{c}
\omega^1 \\
\omega^2 \\
\omega^3\\
\omega^4\\
\omega^5\\
d a_{13}
\end{array}\right),
\end{equation}
where 
\begin{equation}\label{I9}
\begin{aligned}
I_9 &=  -\frac{3}{4I_4} \bigg(-\frac{2}{3}{I_0}_p I_4 - \frac{4}{3}{I_4}_u + \frac{4}{3}{I_6}_p + I_4^2(I_0^2-4 I_2) + I_0 I_4 (I_6 + 4 I_3)
  - \frac{14}{15} I_6^2 - \frac{172}{15} I_3 I_6\bigg) \\
+&\frac{{I_6}_r }{60 I_4^3} \bigg(I_4^2(-5I_0^2 - 12 I_1 + 48 I_2) - 5 I_0 I_4 I_6-20I_6^2\bigg) - \frac{3}{4I_4^2}\bigg(-\frac{2}{3}{I_4}_p (I_0 I_4 + I_6))+ \frac{2}{3}{I_6}_q (I_0 I_4 - \frac{8}{3} I_6)\bigg),\\
I_{10} &= \frac{ I_4 (36I_3 -18 I_6)+30{I_4}_p-5I_0{I_6}_r-45I_0I_4^2}{60 I_4}.
\end{aligned}
\end{equation}
This leads to the following constant structure equations when evaluated for the canonical forms \begin{equation}\label{both}u^{(4)}=\frac{5}{3}\frac{r^2}{q} ~\text{ or} ~ u^{(4)}=\frac{4}{3}\frac{r^2}{q}.\end{equation}
\begin{equation}\label{4/3}
\begin{aligned}
 d \theta^1&=-2\,\theta^1 \wedge \pi^{\prime }_1-\theta^2 \wedge \theta^5, \\
 d \theta^2&=c_1\,\theta^1\wedge \theta^4+c_2\,\theta^2\wedge \theta^3-\,\theta^2 \wedge \pi^{\prime }_1-\theta^3 \wedge \theta^5, \\
 d \theta^3&=c_3\,\theta^2 \wedge \theta^4-\theta^4 \wedge \theta^5, \\
 d \theta^4&=c_4\,\theta^3 \wedge \theta^4+\,\theta^4 \wedge \pi^{\prime }_1 ,\\
 d \theta^5&=c_5\,\theta^1 \wedge \theta^4+c_6\,\theta^3 \wedge \theta^5-\,\theta^5\wedge \pi'_1, \\
 d \pi^{\prime 1}&=c_7\,\theta^2 \wedge \theta^4+c_8\, \theta^4 \wedge \theta^5,
\end{aligned}
\end{equation}
where the constants $c_1,...,c_8$ are given in the following table for the two canonical forms.

\begin{table}[H]
\centering
\begin{tabular}{lcccccccc}
\toprule
Equation & $c_1$ & $c_2$ & $c_3$ & $c_4$ & $c_5$ & $c_6$ & $c_7$ & $c_8$ \\
\midrule
$u^{(4)}=\frac{5}{3}\frac{r^2}{q}$ 
& $\frac{3}{5}$ 
& $\frac{9}{10}$ 
& $\frac{9}{5}$ 
& $\frac{9}{10}$ 
& $-\frac{27}{25}$ 
& $\frac{9}{10}$ 
& $-\frac{27}{50}$ 
& $\frac{3}{10}$ \\

$u^{(4)}=\frac{4}{3}\frac{r^2}{q}$ 
& $-\frac{3}{4}$ 
& $\frac{9}{8}$ 
& $0$ 
& $-\frac{9}{8}$ 
& $0$ 
& $\frac{9}{8}$ 
& $0$ 
& $-\frac{3}{8}$ \\
\bottomrule
\end{tabular}
\caption{The constants $c_1,\dots,c_8$ correspond to the two fourth-order ODEs given in \eqref{both}.}
\end{table}

Theorem \ref{thm3} follows from \cite[Theorem 8.15, page 268]{Olver1995} and is given in the subsequent section.

 \section{Main Results}
 
 \begin{theorem}\label{thm0} A fourth-order ODE $u^{(4)}=f(x, u, p, q,r)$ is equivalent to the canonical form
\begin{equation*}
\begin{aligned}
u^{(4)}=\frac{6qr}{p}-\frac{6q^3}{p^2} 
\end{aligned}
\end{equation*}
with five symmetries under point transformation if and only if it belongs to the branch $I_4= I_{5 }= I_8=I_9=0,~I_6 \neq 0,~I_{10}\neq 0 $  and the exterior derivative of the coframe associated with this branch 

\begin{equation}\label{thm0I4}
 \begin{pmatrix}\\
 \theta^{1}\\
 \theta^{2}\\
 \theta^{3}\\ 
 \theta^4\\ 
 \theta^5\\\\
 \end{pmatrix}= 
  \left(\begin{array}{ccccc} 
J_6J_{10}& 0 & 0&0&0 \\ -I_{11} &J_6 & 0&0&0 \\ \frac{2}{3}\frac{{I_{11}^2}}{J_6J_{10}}&- \frac{4}{3}\frac{I_{11}}{J_{10}} & \frac{J_6}{J_{10}}&0&0\\ \frac{-2I_{11}^3-9J_6^3I_7}{9J_6^2J_{10}^2}&\frac{2I_{11}^2}{3J_6J_{10}^2}&-\frac{I_{11}}{J_{10}^2}&\frac{J_6}{J_{10}^2}&0\\
 \frac{6}{5}I_3J_{10}&0&0&0&J_{10} \end{array}\right) 
 \begin{pmatrix} \\
\omega^{1}\\
\omega^{2}\\
\omega^{3}\\ \omega^4\\
\omega^5\\\\
\end{pmatrix}, \end{equation} 
have identical constant structure equations for appropriate choice of $J_6,~J_{10}$, where\\
\begin{equation}\label{repeated}
\begin{aligned}
I_0&=-f_r,\quad I_1=-\frac{1}{6}\hat{D}_xI_0+\frac{1}{4}I_0^2,\quad I_2=-\frac{1}{2}\hat{D}_xI_0-f_q,\quad I_3=-\frac{1}{8} {I _0}_r I_ 0+\frac{1}{4} {I_ 1}_r-\frac{1}{12} {I_ 2}_r,\\
\quad I _4&=-\frac{1}{6} {I_ 0}_r,\quad I_5=-\frac{18}{5}I_0I_3-\frac{3}{5}{I_0}_p+\frac{2}{5}(I_2-3I_1)_q,\quad I_6=-\frac{36}{25}I_3^2-\frac{3}{5}I_0{I_3}_q-\frac{6}{5}{I_3}_p,\\
J_6^2&=I_6,~I_7=\frac{1}{2}I_0^3-\frac{3}{10}\left(\hat{D}_x(I_1+I_2)-f(I_1+I_2)_r\right)+\frac{6}{5}fI_3-\frac{1}{20}I_0(I_2-39I_1),\\
I_8&=\frac{7}{8}I_0^3-(\hat{D}_xI_2+{I_2}_rf)-f_p+6fI_3+I_0(-3I_1-\frac{1}{2}I_2),\quad I_9=-\frac{6}{5}{I_3}_q,\\
I_{10}&=\hat{D}_xI_7-f_u+I_0I_7+(I_1+I_2)(\frac{9}{100}(9I_1-I_2)-\frac{9}{40}I_0^2),\quad J_{10}^4=I_{10},\\
I_{11}&=-3 {J_6}_x-3p{J_6}_u-3q{J_6}_p.
\end{aligned}
 \end{equation}

\end{theorem}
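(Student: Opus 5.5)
The argument runs through Cartan's method, following the reduction of Section 2 and closing with the rank-zero equivalence theorem for $e$-structures \cite[Theorem 8.15, page 268]{Olver1995}.

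\emph{Necessity.} Suppose the ODE is point-equivalent to $\bar u^{(4)}=6\bar q\bar r/\bar p-6\bar q^3/\bar p^2$ via \eqref{2.5}. By \eqref{2.6} the lifted coframes correspond under the prolonged map $\Phi$. Every normalization in Section 2 is made by setting an essential torsion coefficient to a constant. Hence the quantities $I_4$, $I_8$ (and the sub-branch data $I_5$, $I_9$), together with the nonvanishing of $I_6$ and $I_{10}$, are relative invariants: each is multiplied under $\Phi$ by a nonzero factor built from the group parameters, as the formulas for $T^3_{24}$, $T^3_{15}+T^4_{25}$, $T^5_{12}$, $T^5_{13}$, $T^5_{15}$ and $T^4_{15}$ show. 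Their vanishing or nonvanishing is therefore preserved. The first step is to evaluate them directly on the canonical form, with $f=6qr/p-6q^3/p^2$. Since $f_r=6q/p$, we get $I_0=-6q/p$ and $I_4=0$. Continuing with the formulas in \eqref{repeated}, I expect $I_5=I_8=I_9=0$, $I_6\neq0$ and $I_{10}\neq0$. This places the canonical form, and hence $f$, in the stated branch.

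\emph{Invariant coframe.} Within this branch, the normalizations $a_1=a_{13}J_6$, $a_7=\dots$, $a_9=-I_{11}/J_{10}^2$ and $a_{13}=J_{10}$ exhaust the group $G$. The result is the invariant coframe \eqref{thm0I4}, and since the normalizations are intrinsic, $\Phi^*\bar\theta^i=\theta^i$. The sign and root choices for $J_6$ and $J_{10}$ amount to a discrete ambiguity, which is why the statement allows an ``appropriate choice''. Taking exterior derivatives, $d\theta^i=\tfrac12 C^i_{jk}\theta^j\wedge\theta^k$ and $d\bar\theta^i=\tfrac12\bar C^i_{jk}\bar\theta^j\wedge\bar\theta^k$, and pulling back shows $C^i_{jk}=\bar C^i_{jk}\circ\Phi$. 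The second computation, recorded in Section 2, is that for the canonical form all $\bar C^i_{jk}$ are constants. Therefore the structure functions of the given ODE are the same constants.

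\emph{Sufficiency.} Conversely, suppose $f$ lies in the branch and the coframe \eqref{thm0I4} has the same constant structure equations as the canonical one. Both are $e$-structures of rank zero on five-dimensional manifolds. By \cite[Theorem 8.15]{Olver1995}, there is a local diffeomorphism $\Phi$ of $J^3$ with $\Phi^*\bar\theta^i=\theta^i$. The main obstacle is showing that this $\Phi$ is the third prolongation of a point transformation, not merely a diffeomorphism of $J^3$. The plan is to use the triangular structure of \eqref{thm0I4}. Because $\theta^1\propto\omega^1$, $\theta^5\propto\omega^5+c\,\omega^1$, and the span $\langle\theta^1,\dots,\theta^k\rangle=\langle\omega^1,\dots,\omega^k\rangle$, $\Phi$ preserves the contact system $\langle\omega^1,\omega^2,\omega^3\rangle$ and the differential ideal generated by $\omega^1,\dots,\omega^4$. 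It therefore maps solutions to solutions and is a contact transformation of $J^3$ preserving the ODE. The structure group \eqref{2.7} encodes point (not just contact) transformations through its zero pattern: $\omega^5$ transforms only with $\omega^1$, and $\omega^2$ only with $\omega^1$. Preserving these spans forces $\Phi$ to preserve $\langle du-p\,dx,\,dx\rangle$ modulo higher forms, that is, to project to the $(x,u)$-plane. A Bäcklund-type argument then shows $\Phi$ is the prolongation of that projection.

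Finally, the symmetry count follows from the same theorem. A rank-zero coframe on a five-dimensional space has a five-dimensional symmetry group, consistent with the algebra $(15,5)$.
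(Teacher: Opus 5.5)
Your proposal is correct and is essentially the paper's own argument: the Section~2 reduction in the branch $I_4=0$ down to the rank-zero invariant coframe, followed by Olver's Theorem~8.15. Your sufficiency discussion shows that the equivalence map preserves $\langle\omega^1,\omega^5\rangle=\langle du,dx\rangle$ and the flag $\langle\omega^1,\dots,\omega^k\rangle$, hence is a prolonged point transformation; this only makes explicit what the paper takes for granted from the structure group $G$.

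One caution for the evaluation you only ``expect'': use the Section~2 forms of $I_7$ and $I_8$, namely $+\frac{1}{20}I_0(I_2-39I_1)$ and $-(\hat{D}_xI_2-{I_2}_rf)$. The signs in the statement's display are misprinted, and read literally they give $I_8=36q(pr-q^2)/p^3\neq 0$ on the canonical form. The Section~2 formulas instead give $I_4=I_5=I_8=I_9=0$, $I_6=\frac{6}{25p^2}$ and $I_{10}=-\frac{9(2pr-3q^2)^2}{100p^4}$, so $J_{10}$ is necessarily complex, as in the paper's example.
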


\begin{theorem}\label{thm1} A fourth-order ODE $u^{(4)}=f(x, u, p, q,r)$ is equivalent to one of the canonical forms
\begin{equation*}
\begin{aligned}
\text{(i)} ~u^{(4)}&=e^{r} ~\text{(ii)} ~u^{(4)}=r^{\frac{3}{2}},~\text{(iii)}~u^{(4)}=r^{\frac{4}{3}},~\text{(iv)}~u^{(4)}=r^{\frac{b-3}{b-2}},b \neq 1,2,3\\
\text{(v)}~u^{(4)}&=\frac{6qr}{p}-\frac{6q^3}{p^2}+K\frac{(3q^2-2pr)^\frac{3}{2}}{p^2},K\neq 0~\text{(vi)}~u^{(4)}=r^{\frac{1-3b}{1-2b}},b \neq 0,~\frac{1}{3},~\frac{1}{2},~1 
\end{aligned}
\end{equation*}
with five symmetries under point transformation if and only if it belongs to the branch $I_4\neq 0, I_{5 }\neq 0$ and the exterior derivative of the coframe associated with this branch 

\begin{equation}\label{2.19}
 \begin{pmatrix}\\
 \theta^{1}\\
 \theta^{2}\\
 \theta^{3}\\ 
 \theta^4\\ 
 \theta^5\\\\
 \end{pmatrix}= 
 \left(\begin{array}{ccccc} 
 \frac{I_4}{I_5^2} & 0 & 0&0&0 \\ \frac{I_{6}}{I_5} & \frac{I_4}{I_5} & 0&0&0 \\ \frac{2}{3}\frac{{I_6^2}}{I_4}& \frac{4}{3}I_6 & I_4&0&0\\ -I_4I_5I_7+\frac{2}{9}\frac{I_5I_6^3}{I_4^2}&\frac{2}{3}\frac{I_5I_6^2}{I_4}&I_5I_6&I_4I_5&0\\
 -\frac{3}{5}\frac{I_6}{I_5}+\frac{6}{5}\frac{I_3}{I_5}&0&0&0&\frac{1}{I_5} \end{array}\right) 
 \begin{pmatrix} \\
\omega^{1}\\
\omega^{2}\\
\omega^{3}\\ \omega^4\\
\omega^5\\\\
\end{pmatrix}, \end{equation} 

 have identical constant structure equations, where\\
\begin{equation}\label{repeated}
\begin{aligned}
I_0&=-f_r,\quad I_1=\frac{1}{4}I_0^2-\frac{1}{6}\hat{D}_xI_0,\quad I_2=-\frac{1}{2}\hat{D}_xI_0-f_q,\quad I_3=-\frac{1}{8} {I _0}_r I_ 0+\frac{1}{4} {I_ 1}_r-\frac{1}{12} {I_ 2}_r,\\
\quad I _4&=-\frac{1}{6} {I_ 0}_r,\quad I_5=- \frac{1}{2}\frac{{I_4}_r}{I_4^2},\quad I_6=-\frac{3}{2} I _0 I_ 4-{I_ 2}_r-6 I _3,\\
I_7&=\frac{1}{10}f\left(9I_0I_4+(-I_2+3I_1)_r-12I_3\right)+\frac{1}{2}I_0(I_0^2+\frac{1}{10}I_2)-\frac{3}{10}\hat{D}_x(I_1+I_2)-\frac{39}{20}I_0I_1.
\end{aligned}
 \end{equation}

Moreover, the parameter $b$ in \text{(iv)} can be deduced through the invariant relation $\frac{T^2_{24}}{T^5_{45}}=-b,$ in \text{(vi)} by $\frac{T^5_{45}}{T^2_{24}}=-b$ and  the parameter $K$ can be found using the invariant relation $T^4_{25}=\frac{3K^2-4}{8K^2}.$
\end{theorem}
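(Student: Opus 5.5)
The proof rests on the Cartan reduction carried out in Section 2 together with \cite[Theorem 8.15, page 268]{Olver1995}: two coframes of rank zero are locally equivalent if and only if their structure equations have the same constant structure functions. Both directions therefore reduce to showing that the coframe \eqref{2.19} is an invariant coframe, and then identifying its structure constants on each of the six canonical forms.

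\emph{Invariance of the branch and of the coframe.} In the branch $I_4\neq0$, the normalizations $a_3,a_6,a_{10}$ of \eqref{2.11}, then $a_2,a_5$, then $a_1,a_4,a_8,a_{11}$, then $a_7,a_9$ reduce the structure group to the one-parameter group $G_4$. In that group $T^2_{24}=a_{13}I_5$, so $I_5$ is a relative invariant. When $I_5\neq0$ we set $a_{13}=1/I_5$, which leaves no free parameters. The lifted forms satisfy $\Phi^*\bar\theta^i=\theta^i$ at every stage, so the resulting matrix gives an invariant coframe on $M=\mathbf J^3$. Moreover, under the pullback \eqref{2.6} the conditions $I_4\neq0$ and $I_5\neq0$ are preserved, because at each stage the quantities being normalized transform by nonvanishing factors.

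\emph{Necessity.} Suppose $u^{(4)}=f$ is point-equivalent to one of (i)--(vi). We compute $I_0,\dots,I_7$ for each canonical form directly from \eqref{repeated}. For $f=r^{m}$, $I_0=-mr^{m-1}$ and $I_4=\frac{m(m-1)}{6}r^{m-2}\neq0$ since $m\neq0,1$. The quantity $I_5=-\frac12 {I_4}_r/I_4^2$ is nonzero exactly when $m\neq2$; this is why $r^2$ belongs to the other sub-branch. The case $e^r$ is analogous. For (v), $K\neq0$ is precisely what makes $I_4\neq0$, since $K=0$ gives the $I_4=0$ branch. Invariance then places $f$ in the branch $I_4\neq0$, $I_5\neq0$. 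The coframe of $f$ is the pullback of the canonical one, so its structure equations coincide with those of the canonical form. For each of the six forms we then verify that $d\theta^i=\sum_{j<k} C^i_{jk}\theta^j\wedge\theta^k$ has constant $C^i_{jk}$. Expressing $d\theta^i$ back in $dx,du,dp,dq,dr$ is mechanical but heavy, and would be done symbolically.

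\emph{Sufficiency and parameters.} Suppose $f$ lies in the branch and its coframe \eqref{2.19} has the same constant structure functions as one of the canonical coframes. Both coframes then have rank zero, and Olver's Theorem 8.15 gives a local diffeomorphism $\Phi$ of $\mathbf J^3$ with $\Phi^*\bar\theta=\theta$. Because the coframe was built inside the $G$-structure \eqref{2.6}, the map $\Phi$ preserves the contact system, the forms $\omega^1,\omega^2$ and the fibration. It is therefore the prolongation of a point transformation \eqref{2.5}, which maps one equation to the other. The symmetry dimension is then read off: a rank-zero coframe in dimension $5$ has a $5$-dimensional symmetry group.

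To tell the families apart, note that the structure constants are invariants. We compute $T^2_{24}$, $T^5_{45}$ and $T^4_{25}$ for the families (iv), (vi) and (v). Showing $T^2_{24}/T^5_{45}=-b$ and $T^4_{25}=(3K^2-4)/(8K^2)$ identifies the parameter.

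The main obstacle is the verification that the structure functions are constant for all six forms, together with the consistency of the parameter relations. I expect this to be handled by symbolic computation, using the chain $I_0\to I_7$ to control expression swell.
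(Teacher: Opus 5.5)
Your proposal is correct and follows essentially the same route as the paper. You normalize $a_{13}=1/I_5$ to obtain the rank-zero invariant coframe of the statement, verify (symbolically) that its structure functions are constant on the six canonical forms, invoke Olver's Theorem 8.15, and read off $b$ and $K$ from invariant torsion coefficients; beyond the paper, you also make explicit the checks that each canonical form satisfies $I_4\neq0$, $I_5\neq0$, why a coframe equivalence respecting the $G$-structure is the prolongation of a point transformation, and the count of five symmetries from rank zero.
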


\begin{theorem}\label{thm2}
A fourth-order ODE $u^{(4)}=f(x, u, p, q,r)$ is equivalent to one of the canonical forms
\begin{equation*}
\begin{aligned}
\text{(i)} ~u^{(4)}&=r^2 ~\text{(ii)} ~u^{(4)}=K\frac{r^2}{q},~ K\neq 0, \frac{4}{3},\frac{5}{3},~\text{(iii)}~u^{(4)}=\frac{5}{3}\frac{r^2}{q}+q^{\frac{5}{3}},~\text{(iv)}~u^{(4)}= \frac{4}{3}\frac{r^2}{q}+q^{\frac{7}{3}},
\end{aligned}
\end{equation*}
with five symmetries under point transformation if and only if it belongs to the branch $I_4\neq 0, I_{5 }= 0,~I_8 \neq 0$  and the exterior derivative of the coframe associated with this branch 

\begin{equation}\label{2.19}
 \begin{pmatrix}\\
 \theta^{1}\\
 \theta^{2}\\
 \theta^{3}\\ 
 \theta^4\\ 
 \theta^5\\\\
 \end{pmatrix}= 
 \left(\begin{array}{ccccc}
I_4J_8^2 & 0 & 0 & 0 & 0 \\[0.8em]
I_6J_8 & I_4J_8 & 0 & 0 & 0 \\[0.8em]
\frac{2}{3} \frac{I_6^2}{I_4} & \frac{4}{3}I_6 &I_4 & 0 & 0 \\[0.8em]
-\frac{I_4I_7}{J_8}+\frac{2}{9}\frac{I_6^3}{I_4^2J_8} &\frac{2}{3}\frac{I_6^2}{I_4J_8} &\frac{I_6}{J_8} & \frac{I_4}{J_8}&0 \\[0.8em]
-\frac{3}{5}I_6J_8+\frac{6}{5}I_3J_8 &0 &0 & 0 &J_8
\end{array}\right)
 \begin{pmatrix} \\
\omega^{1}\\
\omega^{2}\\
\omega^{3}\\ \omega^4\\
\omega^5\\\\
\end{pmatrix}, \end{equation} 
have identical constant structure equations for appropriate choice for $ J_8$,\\ where
 $I_0 ,....,I_7$ are given by equation \eqref{repeated} and 
\begin{equation}\label{repeated2}
\begin{aligned}
I_8&=\frac{I_6}{I_4^2}\hat{D}_x(I_4-{I_4}_rf)-\frac{1}{I_4}\hat{D}_xI_6+\frac{3}{4}I_0^2-\frac{1}{2}\frac{I_0I_6}{I_4}+\frac{3}{10}I_2+\frac{1}{3}\frac{I_6^2}{I_4^2}-\frac{27}{10}I_1,\, J_8^2=I_8.
\end{aligned}
 \end{equation}

Furthermore, the parameter $K$ can be deduced using the invariant relation $T^3_{13}=\frac{2K-3}{K}.$

\end{theorem}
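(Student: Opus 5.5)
The statement to prove is Theorem~\ref{thm2}. My plan is to deduce it from \cite[Theorem 8.15, page 268]{Olver1995}. That theorem says two coframes of rank zero (constant structure functions) are locally equivalent if and only if they have identical structure constants. The work therefore splits into three parts. First, show that the coframe \eqref{2.20} is genuinely invariant on the branch $I_4\neq0$, $I_5=0$, $I_8\neq0$. Second, compute its structure equations at each of the four canonical forms (i)--(iv). Third, show that these four sets of constants are pairwise distinct, and that in case (ii) they separate different values of $K$.

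\emph{Invariance and branch conditions.} The normalizations used earlier are $a_3,a_6,a_{10}$ from \eqref{2.11}, $a_2,a_5$ in the second iteration, $a_1,a_4,a_8,a_{11}$ in the third, $a_7,a_9$ giving $G_4$, and finally $a_{13}=J_8$. Each of them sets an essential torsion coefficient to a constant. So each is preserved by the pullback relation \eqref{2.6} for any point transformation $\Phi$.

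The quantities $I_4$, $I_5$ and $I_8$ multiply torsion coefficients by nonvanishing powers of group parameters. They are therefore relative invariants, and their vanishing or non-vanishing is a point-invariant condition.

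For the necessity direction, suppose an ODE is equivalent to one of (i)--(iv). Then the pullback of the canonical invariant coframe equals the ODE's coframe \eqref{2.20}, up to the choice of the sign of $J_8$. This is why the statement says ``appropriate choice of $J_8$'': the square root $J_8^2=I_8$ is determined only up to sign, so one must allow both signs. Hence the branch conditions transfer to the ODE, and so do the structure constants.

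\emph{Computation at the canonical forms.} For each of $u^{(4)}=r^2$, $Kr^2/q$, $\frac53 r^2/q+q^{5/3}$ and $\frac43 r^2/q+q^{7/3}$, I will carry out the following steps.
\begin{enumerate}
\item Compute $I_0,\dots,I_8$ from \eqref{repeated} and \eqref{repeated2}.
\item Check that $I_4=-\frac16 f_{rr}\neq0$, that $I_5\propto (I_4)_r=0$ (this holds because $f$ is quadratic in $r$, so $I_4$ is independent of $r$), and that $I_8\neq0$.
\item Form the coframe \eqref{2.20} and compute $d\theta^i$ in terms of $\theta^j\wedge\theta^k$ by inverting the triangular matrix.
\end{enumerate}
The coefficients should come out constant, consistent with the five-dimensional symmetry algebra acting transitively on the five-dimensional space $\mathbf J^3$.

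For the sufficiency direction, take an ODE with the stated branch conditions whose coframe has the same constants as one canonical form. Then Olver's theorem gives a local diffeomorphism of $\mathbf J^3$ mapping one coframe to the other. Because it preserves $\theta^1$ and the contact-adapted structure, it is the prolongation of a point transformation. This is the standard conclusion of Cartan's method once the structure group has been completely reduced.

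\emph{Distinguishing the forms and the parameter $K$.} The main obstacle is showing that the four families yield distinct constant structure equations. It must also be shown that in case (ii) the map $K\mapsto$ constants is injective, specifically via $T^3_{13}=\frac{2K-3}{K}$.

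I would first compute $T^3_{13}$ symbolically for $f=Kr^2/q$ and verify that it equals $(2K-3)/K$. This function is injective in $K$, so different $K$ give inequivalent equations.

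Next I would compare (i), (iii) and (iv) against that family. (iii) and (iv) can be distinguished from the $Kr^2/q$ family by at least one other coefficient, for instance a term that vanishes for pure $Kr^2/q$ but not when the $q^{5/3}$ or $q^{7/3}$ term is added. (i) could coincide with the formal limit $K\to\infty$ of (ii) or with one of the excluded values, so its constants should be checked against $T^3_{13}=2$.

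The exclusions $K\neq0,\frac43,\frac53$ match the $I_8=0$ sub-branch, since $(6,6)$ and $(28,6)$ land there, together with the linear case. This I will confirm by checking that $I_8$ vanishes exactly at those values.

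The expression swell in $I_7$ and $I_8$ is the main computational hazard. I would handle it by evaluating each invariant directly at the specific $f$ rather than simplifying the general formulas.
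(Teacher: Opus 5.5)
Your plan is correct and follows the paper's own argument. Both reduce to the rank-zero invariant coframe on the branch $I_4\neq0$, $I_5=0$, $I_8\neq0$ (with $J_8$ fixed only up to sign), evaluate its structure equations at the four canonical forms, and invoke Olver's Theorem 8.15. Your extra checks (distinctness of the forms, $T^3_{13}=(2K-3)/K$) are computations the paper asserts without detail.

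Two small slips: $I_4=\frac16 f_{rr}$, not $-\frac16 f_{rr}$, and $K=0$ is excluded because $I_4=0$ there, not because $I_8=0$. For $f=Kr^2/q$ a direct computation gives $I_8=\frac{3}{10}(3K-4)(3K-5)\,r^2/q^2$, which confirms the other two exclusions exactly as you anticipated.
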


\begin{theorem}\label{thm3}
 A fourth-order ODE $u^{(4)}=f(x, u, p, q,r)$ is equivalent to one of the canonical forms
\begin{equation*}
\begin{aligned}
\text{(i)}~u^{(4)}=\frac{5}{3}\frac{r^{2}}{q},~\text{(ii)}~u^{(4)}=\frac{4}{3}\frac{r^2}{q}\end{aligned}
\end{equation*}
with six symmetries under point transformation if and only if it belongs to the branch $I_4\neq 0,~I_5 = 0,~I_{8}=0$ and the exterior derivative of the coframe associated with this branch is
\begin{equation}\label{2.9}
\left(\begin{array}{l}
\theta^1 \\
\theta^2 \\
\theta^3 \\
\theta^4\\
\theta^5\\
\pi_1'
\end{array}\right)=
\left(\begin{array}{cccccc}
a_{13}^2I_4 & 0 & 0&0&0&0  \\
a_{13} I_{6}& a_{13}I_4 &0&0 & 0 &0\\
\frac{2}{3}\frac{I_6^2}{I_4}&\frac{4}{3}I_6&I_4&0&0&0\\
-\frac{I_4I_7}{a_{13}}+\frac{2}{9}\frac{I_6^3}{a_{13}I_4^2}&\frac{2}{3}\frac{I_6^2}{a_{13}I_4}&\frac{I_{6}}{a_{13}}&\frac{I_4}{a_{13}}&0&0\\
-\frac{3}{5}a_{13}I_6+\frac{6}{5}a_{13}I_3&0&0&0&a_{13}&0\\
I_9&I_{10}&\frac{-9I_4^2-{I_6}_r}{6I_4}&0&\frac{3I_0I_4-4I_6}{6I_4}&\frac{1}{a_{13}}
\end{array}\right)
\left(\begin{array}{c}
\omega^1 \\
\omega^2 \\
\omega^3\\
\omega^4\\
\omega^5\\
d a_{13}
\end{array}\right),
\end{equation}
where $ I_0,....,I_8$ are given by equations \eqref{repeated},  \eqref{repeated2} and
\begin{equation}\label{I9I10}
\begin{aligned}
I_9 &=  -\frac{3}{4I_4} \bigg(-\frac{2}{3}{I_0}_p I_4 - \frac{4}{3}{I_4}_u + \frac{4}{3}{I_6}_p + I_4^2(I_0^2-4 I_2) + I_0 I_4 (I_6 + 4 I_3)
  - \frac{14}{15} I_6^2 - \frac{172}{15} I_3 I_6\bigg) \\
+&\frac{{I_6}_r }{60 I_4^3} \bigg(I_4^2(-5I_0^2 - 12 I_1 + 48 I_2) - 5 I_0 I_4 I_6-20I_6^2\bigg) - \frac{3}{4I_4^2}\bigg(-\frac{2}{3}{I_4}_p (I_0 I_4 + I_6))+ \frac{2}{3}{I_6}_q (I_0 I_4 - \frac{8}{3} I_6)\bigg),\\
I_{10} &= \frac{ I_4 (36I_3 -18 I_6)+30{I_4}_p-5I_0{I_6}_r-45I_0I_4^2}{60 I_4}.
\end{aligned}
\end{equation}

We have the following constant structure equations once evaluated at the canonical forms \begin{equation}\label{both2}u^{(4)}=\frac{5}{3}\frac{r^2}{q}~ \text{or}~ u^{(4)}=\frac{4}{3}\frac{r^2}{q}.\end{equation}
\begin{equation}\label{4/3c}
\begin{aligned}
 d \theta^1&=-2\,\theta^1 \wedge \pi^{\prime }_1-\theta^2 \wedge \theta^5, \\
 d \theta^2&=c_1\,\theta^1\wedge \theta^4+c_2\,\theta^2\wedge \theta^3-\,\theta^2 \wedge \pi^{\prime }_1-\theta^3 \wedge \theta^5, \\
 d \theta^3&=c_3\,\theta^2 \wedge \theta^4-\theta^4 \wedge \theta^5, \\
 d \theta^4&=c_4\,\theta^3 \wedge \theta^4+\,\theta^4 \wedge \pi^{\prime }_1 ,\\
 d \theta^5&=c_5\,\theta^1 \wedge \theta^4+c_6\,\theta^3 \wedge \theta^5-\,\theta^5\wedge \pi'_1, \\
 d \pi^{\prime 1}&=c_7\,\theta^2 \wedge \theta^4+c_8\, \theta^4 \wedge \theta^5,
\end{aligned}
\end{equation}
where the constants for the two canonical forms are given in the following table.
\begin{table}[H]
\centering
\begin{tabular}{lcccccccc}
\toprule
Equation & $c_1$ & $c_2$ & $c_3$ & $c_4$ & $c_5$ & $c_6$ & $c_7$ & $c_8$ \\
\midrule
$u^{(4)}=\frac{5}{3}\frac{r^2}{q}$ 
& $\frac{3}{5}$ 
& $\frac{9}{10}$ 
& $\frac{9}{5}$ 
& $\frac{9}{10}$ 
& $-\frac{27}{25}$ 
& $\frac{9}{10}$ 
& $-\frac{27}{50}$ 
& $\frac{3}{10}$ \\

$u^{(4)}=\frac{4}{3}\frac{r^2}{q}$ 
& $-\frac{3}{4}$ 
& $\frac{9}{8}$ 
& $0$ 
& $-\frac{9}{8}$ 
& $0$ 
& $\frac{9}{8}$ 
& $0$ 
& $-\frac{3}{8}$ \\
\bottomrule
\end{tabular}
\caption{The constants $c_1,\dots,c_8$ correspond to the two fourth-order ODEs given in \eqref{both2}.}
\end{table}

\end{theorem}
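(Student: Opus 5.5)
The plan is to reduce Theorem~\ref{thm3} to Cartan's equivalence theorem for $e$-structures, \cite[Theorem 8.15, page 268]{Olver1995}, applied on the prolonged space $M^{(1)}=M\times G_4$. I will use the reduction already carried out in Section~2.

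The first step is to note what the earlier normalizations give. In the branch $I_4\neq 0$, the torsion coefficients $T^1_{25},T^2_{35},T^3_{45}$, then $T^3_{35},T^4_{45}$, then $T^3_{25},T^4_{35},T^5_{25},T^3_{24}$, and finally $T^2_{25},T^3_{15}$ have all been normalized. Every normalization used only data that transform equivariantly under point transformations. So two equations are equivalent exactly when their lifted coframes with $S\in G_4$ are equivalent, with $G_4$ depending only on $a_{13}$.

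Next I would show that the branch conditions are necessary and invariant. $I_4$ is a relative invariant; on the branch $I_4\neq0$, $T^2_{24}=a_{13}I_5$ makes $I_5$ a relative invariant, and with $I_5=0$ the torsion $T^2_{15}=I_8/a_{13}^2$ makes $I_8$ one. Each condition $I_4\neq0$, $I_5=0$, $I_8=0$ is therefore preserved by point transformations. It then suffices to evaluate $I_4,I_5,I_8$ directly on $u^{(4)}=\tfrac53 r^2/q$ and $u^{(4)}=\tfrac43 r^2/q$. For $f=Kr^2/q$ one has $I_0=-2Kr/q$ and $I_4=K/(3q)\neq0$. Also $I_4$ does not depend on $r$, so $I_5=0$. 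A direct substitution should give $I_8=0$ precisely for these two values of $K$, in agreement with the list in Theorem~\ref{thm2}, which excludes them.

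In the sub-branch $I_8=0$, every remaining torsion coefficient, once evaluated on the canonical forms, is independent of $a_{13}$, so no further normalization is available. I then need to check that the problem is determinate: after absorption, the single remaining form $\pi_1'$ is uniquely fixed, i.e.\ the Cartan test gives zero degree of indeterminacy. Solving the absorption equations for $\pi_1'$ gives the sixth row of \eqref{2.9}, with coefficients $I_9$, $I_{10}$, $(-9I_4^2-{I_6}_r)/(6I_4)$, $(3I_0I_4-4I_6)/(6I_4)$ and $da_{13}/a_{13}$. This yields an $e$-structure on the six-dimensional manifold $M^{(1)}$.

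Finally I would compute $d\theta^i$ and $d\pi_1'$ for the two canonical forms and check that they reduce to \eqref{4/3c} with the constants in the table. The two columns differ (for example $c_3=\tfrac95$ versus $0$), so the two forms are inequivalent. Constant structure functions mean rank zero, so Theorem 8.15 of \cite{Olver1995} gives both directions. If an equation in this branch has the same constant structure equations, there is a local diffeomorphism of $M^{(1)}$ matching the coframes, and it projects to a point transformation. Conversely, equivalence forces identical structure equations. The symmetry group is $\dim M^{(1)}=6$-dimensional, consistent with $(6,6)$ and $(28,6)$.

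The main obstacle is the determinacy step and the evaluation of $d\pi_1'$. Showing that $\pi_1'$ is uniquely determined, and that no torsion coefficient still depends on $a_{13}$ in the general $I_8=0$ branch, requires the full symbolic absorption. I would first do this computation on the canonical forms, then use invariance to transfer it to arbitrary equations in the branch.
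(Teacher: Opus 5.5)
Your proposal is correct and follows essentially the same route as the paper: branch on the relative invariants $I_4\neq 0$, $I_5=0$, $I_8=0$ (via $T^2_{24}=a_{13}I_5$ and $T^2_{15}=I_8/a_{13}^2$), observe that $a_{13}$ cannot be normalized and that $\pi_1'$ is uniquely determined, pass to the $e$-structure on the six-dimensional space $M^{(1)}=M\times G_4$, and invoke Olver's Theorem 8.15 once the structure equations of the two canonical forms turn out to be constant. Your direct check on $f=Kr^2/q$ is a useful addition that the paper leaves implicit; carrying it out gives $I_3=-\frac{K}{6}\frac{r}{q^2}$, $I_6=K(1-K)\frac{r}{q^2}$ and $I_8=\frac{3}{10}(3K-4)(3K-5)\frac{r^2}{q^2}$, which vanishes exactly for $K=\frac43,\frac53$, as you predicted.
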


\section{Determination of point transformation via invariant coframes}
In this section, we present a method for constructing the point transformation between two equivalent fourth-order ODEs belonging to the canonical forms listed in Table 1. This approach is based on the invariant coframes established in the previous section, together with the following propositions.
\begin{proposition}\label{prop1} Assume that the fourth-order ODEs

\begin{equation}\label{equi}
u^{(4)}=f\left(x, u, u^{\prime}, u^{\prime \prime}, u^{\prime \prime \prime}\right), \quad \bar{u}^{(4)}=\bar{f}\left(\bar{x}, \bar{u}, \bar{u}^{\prime}, \bar{u}^{\prime \prime}, \bar{u}^{\prime \prime \prime}\right),
\end{equation}
are equivalent under the point transformation

\begin{equation}\label{point}
\bar{x}=\varphi(x, u), \quad\bar{u}=\psi(x, u),\quad \phi_x\psi_u-\phi_u\psi_x \neq 0.
\end{equation} 
Given an invariant five-dimensional coframe on the space $M$ such that

\begin{equation}\label{eq}
\Phi^*\left(\begin{array}{ccccc}
\bar{a}_1 & 0 & 0 & 0 &0\\
\bar{a}_2 & \bar{a}_3 & 0 & 0&0 \\
\bar{a}_4 & \bar{a}_5 & \bar{a}_6 & 0&0 \\
\bar{a}_7 & \bar{a}_8 &  \bar{a}_9&\bar{a}_{10}&0\\
\bar{a}_{11}& 0&0&0&\bar{a}_{13}
\end{array}\right)\left(\begin{array}{c}
 \bar{\omega}^1\\
 \bar{\omega}^2\\
 \bar{\omega}^3\\
 \bar{\omega}^4\\
 \bar{\omega}^5
\end{array}\right)=\left(\begin{array}{ccccc}
a_1 & 0 & 0 & 0 &0\\
a_2 & a_3 & 0 & 0 &0\\
a_4 & a_5 & a_6 & 0 &0\\
a_7 & a_8 &  a_9&a_10&0\\
a_{11}&0&0&0&a_{13}
\end{array}\right)\left(\begin{array}{c}
\omega^1 \\
\omega^2 \\
\omega^3 \\
\omega^4\\
\omega^5
\end{array}\right),
\end{equation}
for some functions $a_i(x, u, p, q, r), i=1,2 \ldots 11, 13$ with $\Phi^*$ representing the pullback associated with the mapping determined by the third prolongation of the point transformation \eqref{point} and $$\left(\begin{array}{l}
\omega^1,\omega^2,\omega^3,\omega^4,\omega^5
\end{array}\right)^T
=\Omega
\left(\begin{array}{c}
du-pdx,
dp-qdx,
dq-rdx,
dr-fdx,
dx,
\end{array}\right)^T$$ is an adapted coframe for some invertible matrix  $\Omega$. Then the following system can be used to construct the point transformation between the fourth-order ODEs \eqref{equi}.
\begin{equation}\label{pdesys}
\begin{aligned}
&\hat{D}_x\xi=\bar{f}b_{13},\quad \xi_u=\bar{f}b_{11}+b_7,\quad \xi_p=b_8,\quad \xi_q=b_9,\quad\xi_r=b_{10},\\
&\hat{D}_x\eta=\xi b_{13},\quad\eta_u=\xi b_{11}+b_4,\quad \eta_{p}=b_5,\quad \eta_q=b_6,\\
&\hat{D}_x g=\eta b_{13},\quad g_u=\eta b_{11}+b_2,\quad g_p=b_3,\\
&\hat{D}_x\psi=gb_{13},\quad \psi_u=gb_{11}+b_1, \\
&\hat{D}_x\phi=b_{13},\quad\phi_u=b_{11},
\end{aligned}
\end{equation}
where 
\begin{equation*}
g=\frac{\hat{D}_x\psi}{\hat{D}_x \phi},\quad \eta=\frac{\hat{D}_xg}{\hat{D}_x \phi}, \quad \xi=\frac{\hat{D}_x \eta}{\hat{D}_x \phi},\quad \hat{D}_x=\frac{\partial}{\partial x}+p\frac{\partial}{\partial u}+q\frac{\partial}{\partial p}+r\frac{\partial }{\partial q}+f\frac{\partial}{\partial r},\\
\end{equation*}
and 

\begin{equation}\label{eqq}
\left(\begin{array}{ccccc}
b_1 & 0 & 0 & 0&0 \\
b_2 & b_3 & 0 & 0 &0\\
b_4 & b_5 & b_6 & 0&0 \\
b_7 & b_8 &  b_9&b_{10}&0\\
b_{11}&b_{12}&0&0&b_{13}
\end{array}\right)=\bar{\Omega}^{-1}\setlength{\arraycolsep}{1pt}\left(\begin{array}{ccccc}
\bar{a}_1 & 0 & 0 & 0&0 \\
\bar{a}_2 & \bar{a}_3 & 0 & 0 &0\\
\bar{a}_4 & \bar{a}_5 & \bar{a}_6 & 0 &0\\
\bar{a}_7 & \bar{a}_8 & \bar{a}_9&\bar{ a}_{10}&0\\
\bar{a}_{11}&0&0&0&\bar{a}_{13}
\end{array}\right)^{-1}\left(\begin{array}{ccccc}
a_1 & 0 & 0 & 0 &0\\
a_2 & a_3 & 0 & 0&0 \\
a_4 & a_5 & a_6 & 0 &0\\
a_7 & a_8 &  a_9&a_{10}&0\\
a_{11}& 0&0&0&a_{13}
\end{array}\right)\Omega.
\end{equation}

\begin{proof} Equation \eqref{eq} can be represented in terms of \eqref{eqq} as

\begin{equation}\label{pullback}
\left(\begin{array}{c}
d \bar{u}-\bar{p} d \bar{x} \\
d \bar{p}-\bar{q} d \bar{x} \\
d \bar{q}-\bar{r} d \bar{x} \\
d \bar{r}-\bar{f} d \bar{x}\\
d \bar{x}
\end{array}\right)=\left(\begin{array}{ccccc}
b_1 & 0 & 0 & 0 &0\\
b_2 & b_3 & 0 & 0 &0\\
b_4 & b_5 & b_6 & 0 &0\\
b_7 & b_8 & b_9& b_{10}&0\\
b_{11}& 0&0&0&b_{13}
\end{array}\right)\left(\begin{array}{c}
d u-p d x \\
d p-q d x \\
d q-r d x \\
d r-f d x\\
d x
\end{array}\right).
\end{equation}

On the other hand, the pullback of the left-hand side of the equation \eqref{pullback} can be calculated which renders system \eqref{pdesys}.

This completes the proof.
\end{proof}
\end{proposition}

\begin{proposition}\label{prop2}
 Assume that the fourth-order ODEs

\begin{equation}\label{eq2}
u^{(4)}=f\left(x, u, u^{\prime}, u^{\prime \prime}, u^{\prime \prime \prime}\right), \quad \bar{u}^{(4)}=\frac{4}{3} \frac{\bar{r}^2}{\bar{q}},
\end{equation}
are equivalent under the point transformation

\begin{equation}\label{point2}
\bar{x}=\varphi(x, u), ~\bar{u}=\psi(x, u), \quad \phi_x\psi_u-\phi_u \psi_x \neq 0.
\end{equation}

Hence, the point transformation \eqref{point2} can be constructed using the following system between the fourth-order ODEs \eqref{eq2}
\begin{equation}\label{sys2}
\begin{aligned}
\hat{D}_xa_{13}&=-\frac{b_{17}}{b_{18}},\quad {a_{13}}_u=-\frac{b_{14}}{b_{18}},\quad {a_{13}}_p=-\frac{b_{15}}{b_{18}},\quad {a_{13}}_q=-\frac{b_{16}}{b_{18}},\\
\hat{D}_x\xi&=\bar{f}b_{13},\quad \xi_u=\bar{f}b_{11}+b_7,\quad \xi_p=b_8,\quad \xi_q=b_9, \quad \xi_r=b_{10},\\
\hat{D}_x\eta &=\xi b_{13},\quad \eta_u=\xi b_{11}+b_4,\quad \eta_p=b_5,\quad \eta_q=b_6,\\
\hat{D}_x g &=\eta b_{13},\quad g_u=\eta b_{11}+b_2,\quad g_p=b_3,\\
\hat{D}_x\psi&=gb_{13},\quad \psi_u=gb_{11}+b_1,\\
\hat{D}_x\phi&=b_{13},\quad \phi_u=b_{11},
\end{aligned}
\end{equation}
where $a_{13}(x,u,p,q)$ is an auxiliary function and $g=\frac{\hat{D}_x\psi}{\hat{D}_x \phi},~ \eta=\frac{\hat{D}_x g}{\hat{D}_x\phi},\\ \xi=\frac{\hat{D}_x\eta}{\hat{D}_x \phi},$~  $\bar{f}=\frac{4}{3}\frac{\xi^2}{\eta}$ and 

\begin{equation}\label{b2}
\begin{aligned}
\begin{pmatrix}
b_1 & 0 & 0 & 0 & 0 & 0 \\
b_2 & b_3 & 0 & 0 & 0 & 0 \\
b_4 & b_5 & b_6 & 0 & 0 & 0 \\
b_7 & b_8 & b_9 & b_{10} & 0 & 0 \\
b_{11} & 0 & 0 & 0 & b_{13} & 0 \\
b_{14} & b_{15} & b_{16} & 0 & b_{17} & b_{18}
\end{pmatrix}
=(\bar{A}\bar{H})^{-1}AH
\end{aligned}
\end{equation}
where\setlength{\arraycolsep}{1pt}
\begin{equation}\label{H}
A=\left(\begin{array}{cccccc}
a_{13}^2I_4 & 0 & 0&0&0&0  \\
a_{13} I_{6}& a_{13}I_4 &0&0 & 0 &0\\
\frac{2}{3}\frac{I_6^2}{I_4}&\frac{4}{3}I_6&I_4&0&0&0\\
-\frac{I_4I_7}{a_{13}}+\frac{2}{9}\frac{I_6^3}{a_{13}I_4^2}&\frac{2}{3}\frac{I_6^2}{a_{13}I_4}&\frac{I_{6}}{a_{13}}&\frac{I_4}{a_{13}}&0&0\\
-\frac{3}{5}a_{13}I_6+\frac{6}{5}a_{13}I_3&0&0&a_{13}&0&0\\
I_9&I_{10}&\frac{-9I_4^2-{I_6}_r}{6I_4}&0&\frac{3I_0I_4-4I_6}{6I_4}&\frac{1}{a_{13}}
\end{array}\right),
\quad
H=\left(
\begin{array}{cc}
\Omega&0\\
0&1
\end{array}
\right),
\end{equation}
$\Omega$ is given in \eqref{2.2}, $\bar{A},~\bar{H}$ obtained by evaluating the matrices $A,~H$ for the canonical form $\bar{u}^{(4)}=\frac{4}{3}\frac{\bar{r}^2}{\bar{q}} $ and $I_0,... ,I_{10}$ are given in  \eqref{repeated}, \eqref{repeated2}, \eqref{I9I10}.

\end{proposition}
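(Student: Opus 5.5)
The plan mirrors the proof of Proposition \ref{prop1}, adapted to the six-dimensional $e$-structure of Theorem \ref{thm3} on the prolonged space $M^{(1)}=M\times G_4$. Since the two equations \eqref{eq2} are equivalent, Theorem \ref{thm3} and Cartan's theory \cite[Theorem 8.15]{Olver1995} give a diffeomorphism $\Psi$ of $M^{(1)}$ with
\begin{equation*}
\Psi^*\bigl(\bar{A}\bar{H}\,(d\bar{u}-\bar{p}d\bar{x},\,d\bar{p}-\bar{q}d\bar{x},\,d\bar{q}-\bar{r}d\bar{x},\,d\bar{r}-\bar{f}d\bar{x},\,d\bar{x},\,d\bar{a}_{13})^T\bigr)=AH\,(du-pdx,\,dp-qdx,\,dq-rdx,\,dr-fdx,\,dx,\,da_{13})^T.
\end{equation*}
This holds because the lifted coframe $(\theta^1,\dots,\theta^5,\pi_1')$ is written as $AH$ applied to the contact forms together with $dx$ and $da_{13}$. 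The map $\Psi$ is the third prolongation of the point transformation \eqref{point2}, extended to the group fiber by some function $\bar{a}_{13}=\bar{a}_{13}(x,u,p,q,r,a_{13})$. The first step is to invert $\bar{A}\bar{H}$. Both $A$ and $H$ are block lower triangular and invertible, since $I_4\neq0$ and $a_{13}\neq0$. This yields
\begin{equation*}
\Psi^*(d\bar{u}-\bar{p}d\bar{x},\dots,d\bar{x},d\bar{a}_{13})^T=B\,(du-pdx,\dots,dx,da_{13})^T,
\end{equation*}
with $B=(\bar{A}\bar{H})^{-1}AH$ as in \eqref{b2}. One checks that $B$ has the zero pattern displayed in \eqref{b2}; this follows from the triangular shapes and the zero entries of $A$ and $H$.

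The second step is to read off the PDE system by comparing coefficients, exactly as in Proposition \ref{prop1}. Write $\bar{x}=\phi$, $\bar{u}=\psi$, $\bar{p}=g$, $\bar{q}=\eta$, $\bar{r}=\xi$, so that $g=\hat{D}_x\psi/\hat{D}_x\phi$ and so on, and $\bar{f}=\frac43\xi^2/\eta$. Expand each pulled-back form in the basis of contact forms, $dx$ and $da_{13}$. The row for $d\bar{x}$ gives $\hat{D}_x\phi=b_{13}$ and $\phi_u=b_{11}$, with vanishing $p,q,r$ derivatives. The rows for $d\bar{u}-\bar{p}d\bar{x}$, $d\bar{p}-\bar{q}d\bar{x}$, $d\bar{q}-\bar{r}d\bar{x}$ and $d\bar{r}-\bar{f}d\bar{x}$ then give, after substituting $d\phi$, the remaining equations in \eqref{sys2} for $\psi,g,\eta,\xi$. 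The zero entries in the last column of rows one to five, and the fact that $\phi,\psi$ do not depend on $a_{13}$, make these rows consistent.

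The last row is the new ingredient. It expresses $d\bar{a}_{13}$ as $b_{14}\omega^1_0+b_{15}\omega^2_0+b_{16}\omega^3_0+b_{17}dx+b_{18}da_{13}$, where $\omega^i_0$ are the bare contact forms. The natural move is to exploit the freedom in the fiber coordinate and restrict to the section $\bar{a}_{13}=\text{const}$. This is admissible because for $\bar{u}^{(4)}=\frac43\bar{r}^2/\bar{q}$ the group parameter cannot be normalized, so a constant value may be chosen. Pulling back to the graph $a_{13}=a_{13}(x,u,p,q)$ then turns the last row into $0=b_{14}\omega^1_0+b_{15}\omega^2_0+b_{16}\omega^3_0+b_{17}dx+b_{18}\,da_{13}$. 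Expanding $da_{13}$ gives the first line of \eqref{sys2}, and it also shows that $a_{13}$ has no $r$-dependence, because the coefficient of $\omega^4_0$ in the last row vanishes. Here $b_{18}\neq0$ is needed; it is the ratio $\bar{a}_{13}/a_{13}$ coming from the diagonal entry $1/a_{13}$.

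I expect the main obstacle to be justifying this restriction to a section $\bar{a}_{13}=\text{const}$, with $a_{13}$ depending only on $(x,u,p,q)$, and the corresponding integrability. To handle it, I would use the structure equation for $d\pi_1'$ in \eqref{4/3c} for the target form, where $c_3=c_5=c_7=0$. From it I would show that the distribution $\theta^1=\dots=\theta^5=0$ together with the fiber equation is compatible. Concretely, the absence of a $\theta^4$-dependence in the last row of $B$ implies $\partial_r a_{13}=0$. I would verify this last point by direct inspection of $A$, whose sixth row has zero in the $\omega^4$ slot. The rest of the argument is routine bookkeeping, as in Proposition \ref{prop1}.
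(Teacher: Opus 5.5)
Your proposal is correct and follows the paper's proof. You write the lifted coframe \eqref{2.99} as $AH$ applied to the contact forms, $dx$ and $da_{13}$, invert $\bar A\bar H$ to obtain $B$, fix $\bar a_{13}$ to a constant (the paper uses $\bar a_{13}=1$) so the last row reads $0=b_{14}\omega^1_0+b_{15}\omega^2_0+b_{16}\omega^3_0+b_{17}dx+b_{18}\,da_{13}$, and read off \eqref{sys2} by comparing coefficients. The integrability argument via the $d\pi_1'$ structure equation that you plan in your last paragraph is not needed: since $b_{18}=\bar a_{13}/a_{13}\neq0$, the level set $\Psi^*\bar a_{13}=1$ is automatically a graph over $M$, and the vanishing $\omega^4_0$-coefficient you already identified gives $\partial_r a_{13}=0$.
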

\begin{proof}
The equivalence of the fourth-order ODEs \eqref{eq2} under point transformation can be checked by using the invariant coframe \eqref{2.99} on the space $M^{(1)}=M \times G_4$ such that
\begin{equation}\label{eq22}
\begin{aligned}
&\Phi^*\!
\bar{A}\bar{H}
\left(
\begin{array}{c}
d\bar{u}-\bar{p}\,d\bar{x}\\
d\bar{p}-\bar{q}\,d\bar{x}\\
d\bar{q}-\bar{r}\,d\bar{x}\\
d\bar{r}-\bar{f}\,d\bar{x}\\
d\bar{x}\\
d\bar{a}_{13}
\end{array}
\right)
=
A H
\left(
\begin{array}{c}
du-p\,dx\\
dp-q\,dx\\
dq-r\,dx\\
dr-f\,dx\\
dx\\
da_{13}
\end{array}
\right).
\end{aligned}
\end{equation}
where $\Phi^{*}$ denotes the pullback induced by the third prolongation of the point transformation \eqref{point2}.
Substitute  $\bar{a}_{13}=1$ in the left hand side of \eqref{eq22}. Then \eqref{eq22} can be represented in terms of \eqref{b2} as 
\begin{equation}\label{b3}
\left(\begin{array}{c}
d \bar{u}-\bar{p} d \bar{x} \\
d \bar{p}-\bar{q} d \bar{x} \\
d \bar{q}-\bar{r} d \bar{x} \\
d \bar{r}-\bar{f} d \bar{x}\\
d \bar{x}\\
0
\end{array}\right)=(\bar{A}\bar{H})^{-1}AH \left(\begin{array}{c}
d u-p d x \\
d p-q d x \\
d q-r d x \\
d r-f d x\\
d x\\
d a_{13}
\end{array}\right)=\setlength{\arraycolsep}{0.5pt}
\left(\begin{array}{cccccc}
b_1 & 0 & 0 & 0 &0&0\\
b_2 & b_3 & 0 & 0 &0&0\\
b_4 & b_5 & b_6 & 0 &0&0\\
b_7 & b_8 & b_9& b_{10}&0&0\\
b_{11}& 0&0&0&b_{13}&0\\
b_{14}&b_{15}&b_{16}&0&b_{17}&b_{18}
\end{array}\right)\left(\begin{array}{c}
d u-p d x \\
d p-q d x \\
d q-r d x \\
d r-f d x\\
d x\\
d a_{13}
\end{array}\right)
\end{equation}
By doing the pullback of the left hand side of \eqref{b3}, we deduce equations \eqref{sys2}.
\end{proof}

\begin{remark}
 If a fourth-order ODE is equivalent to $\bar{u}^{(4)}=\frac{5}{3}\frac{\bar{r}^2}{\bar{q}},$  we can similarly, construct the point transformation  by using Proposition \ref{prop2}, where $\bar{A},~\bar{H}$ are obtained by evaluating the matrices $A,~H$ for the canonical form $\bar{u}^{(4
 )}=\frac{5}{3}\frac{\bar{r}^2}{\bar{q}}.$\end{remark}

\begin{remark}
The explicit values for the matrices $\bar{\Omega},~\bar{A},$ used in Proposition \ref{prop2} after incorporating $\bar{a}_{13}=1$ are given for each canonical form as follows \setlength{\arraycolsep}{0.5pt}
\begin{equation}
\bar{\Omega}=\left(
\begin{array}{ccccc}
1&0&0&0&0\\
\frac{n_1\xi}{\eta}&1&0&0&0\\
\frac{n_2\xi^2}{\eta^2}&\frac{n_3\xi}{\eta}&1&0&0\\
0&\frac{n_4\xi^2}{\eta^2}&0&1&0\\
0&0&0&0&1\\
\end{array}
\right),\quad \bar{A}=\left(\begin{array}{cccccc}
\frac{m_1}{\eta} & 0 & 0&0&0&0  \\
\frac{m_2\xi}{\eta^2}&\frac{m_3}{\eta} &0&0 & 0 &0\\
\frac{m_4\xi^2}{\eta^3}&\frac{m_5\xi}{\eta^2}&\frac{m_6}{\eta}&0&0&0\\
\frac{m_7\xi^3}{\eta^4}&\frac{m_8\xi^2}{\eta^3}&\frac{m_9\xi}{\eta^2}&\frac{m_{10}}{\eta}&0&0\\
\frac{m_{11}\xi}{\eta^2}&0&0&0&a_{13}&0\\
\frac{m_{12}\xi^2}{\eta^3}&\frac{m_{13}\xi}{\eta^2}&\frac{m_{14}}{\eta}&0&\frac{m_{15}\xi}{\eta}&1
\end{array}\right),
\end{equation}
\end{remark}
where the constants $m_1,\dots ,m_{15}$ and $n_1,\dots ,n_4$ are given in the following table
\begin{table}[H]
\centering
\scriptsize
\setlength{\tabcolsep}{3pt}
\setlength{\heavyrulewidth}{1.2pt}
\setlength{\lightrulewidth}{0.8pt}{
\boldmath
\resizebox{\textwidth}{!}{
\begin{tabular}{lccccccccccccccc|cccc}
\toprule
\textbf{Equation} 
& $\mathbf{m_1}$ & $\mathbf{m_2}$ & $\mathbf{m_3}$ & $\mathbf{m_4}$ & $\mathbf{m_5}$ 
& $\mathbf{m_6}$ & $\mathbf{m_7}$ & $\mathbf{m_8}$ 
& $\mathbf{m_9}$ & $\mathbf{m_{10}}$ & $\mathbf{m_{11}}$ & $\mathbf{m_{12}}$ 
& $\mathbf{m_{13}}$ & $\mathbf{m_{14}}$ & $\mathbf{m_{15}}$ 
& $\mathbf{n_1}$ & $\mathbf{n_2}$ & $\mathbf{n_3}$ & $\mathbf{n_4}$ \\
\midrule

$\mathbf{u^{(4)}=\frac{4}{3}\frac{r^2}{q}}$ 
& $\frac{4}{9}$ 
& $-\frac{4}{9}$ 
& $\frac{4}{9}$ 
& $\frac{8}{27}$ 
& $-\frac{16}{27}$ 
& $\frac{4}{9}$ 
& $\frac{8}{81}$ 
& $\frac{8}{27}$ 
& $-\frac{4}{9}$
& $\frac{4}{9}$
& $0$
& $-\frac{1}{3}$
& $\frac{2}{3}$
& $-\frac{1}{2}$
& $-\frac{2}{3}$
& $\frac{4}{3}$
& $\frac{10}{9}$
& $\frac{4}{3}$
& $\frac{2}{3}$ \\

$\mathbf{u^{(4)}=\frac{5}{3}\frac{r^2}{q}}$ 
& $\frac{5}{9}$ 
& $-\frac{10}{9}$ 
& $\frac{5}{9}$ 
& $\frac{40}{27}$ 
& $-\frac{40}{27}$ 
& $\frac{5}{9}$ 
& $-\frac{40}{81}$ 
& $\frac{40}{27}$ 
& $-\frac{10}{9}$
& $\frac{5}{9}$
& $\frac{1}{3}$
& $-\frac{7}{9}$
& $1$
& $-\frac{1}{2}$
& $-\frac{1}{3}$
& $\frac{5}{3}$
& $\frac{16}{9}$
& $\frac{5}{3}$
& $1$ \\

\bottomrule
\end{tabular}}}
\normalsize
\caption{Values of the constants $m_1,\dots,m_{15}$ and $n_1,\dots,n_4$ for the two fourth-order ODEs given in \eqref{both2}.}
\end{table}
\begin{example}
Consider the class of non-linear fourth-order ODE

\begin{equation}\label{example0}
u^{(4)}=\frac{6u_{xx}\large\left((1+u_x)u_{xxx}-u_{xx}^2\large\right)}{(1+u_x)^2} .
\end{equation}

It can be checked that the function $f(x, u, p, q, r)=\frac{6q((1+p)r-q^2)}{(1+p)^2}$ satisfies the conditions of Theorem \ref{thm0}. Thus it is equivalent to the canonical form

\begin{equation}\label{example00}
\bar{u}^{(4)}=\frac{6\bar{u}_{\bar{x}\bar{x}}\bar{u}_{\bar{x}\bar{x}\bar{x}}}{\bar{u}_{\bar{x}}}-\frac{6\bar{u}_{\bar{x}\bar{x}}^3}{\bar{u}_{\bar{x}}}.
\end{equation} 
It can be checked that they have five symmetries, under point transformation and identical constant structure equations given by

\begin{equation}
\begin{aligned}
 d \theta^1&=\frac{1}{4}\sqrt{6}\,\theta^1 \wedge \theta^2-\frac{1}{4}i\sqrt{6}\,\theta^1 \wedge \theta^4-\theta^2 \wedge \theta^5, \\
 d \theta^2&=\frac{1}{2}\sqrt{6}\,\theta^1\wedge \theta^3+-i\,\theta^1\wedge \theta^5-\theta^3 \wedge \theta^5, \\
 d \theta^3&=\frac{1}{3}i\sqrt{6}\,\theta^1 \wedge \theta^2+\frac{7}{12}\sqrt{6}\,\theta^2\wedge \theta^3-\frac{4}{3}i\,\theta^2\wedge \theta^5+\frac{1}{4}i\sqrt{6}\,\theta^3 \wedge \theta^4-\theta^4 \wedge \theta^5, \\
 d \theta^4&=\frac{1}{2}i\sqrt{6}\,\theta^1 \wedge \theta^3+\theta^1 \wedge \theta^5-i\,\theta^3\wedge \theta^5 ,\\
 d \theta^5&=\theta^1 \wedge \theta^2-\frac{1}{12}\sqrt{6}\,\theta^2 \wedge \theta^5+\frac{1}{4} \sqrt{6}i\,\theta^4\wedge \theta^5, \\
\end{aligned}
\end{equation}
by choosing $J_6=-\frac{\sqrt{5}}{6(1+p)},~J_{10}=-\frac{\sqrt{-60ir(1+p)+90iq^2}}{10(1+p)},~\bar{J_6}=-\frac{1}{5}\frac{\sqrt{6}}{g},~\bar{J}_{10}=-\frac{\sqrt{30}i\large\left(-(2g\xi-3\eta^2)^2\large\right)^{\frac{1}{4}}}{10g}.$

The construction of the point transformation \eqref{point} requires the computation of the matrix entries \eqref{eqq} by using \eqref{example0} and \eqref{example00} as below 
$$
\left(\begin{array}{ccccc}
b_1 & 0 & 0 & 0& 0\\
b_2 & b_3 & 0 & 0& 0 \\
b_4 & b_5 & b_6 & 0&0 \\
b_7 & b_8 & b_9& b_{10}&0\\
b_{11}& 0&0&0&b_{13}
\end{array}\right)=\left(\begin{array}{ccccc}
\frac{Bg^2}{C(1+p)^2}& 0 & 0 & 0 & 0\\
0&\frac{g}{1+p} & 0 & 0&0 \\
0 & \frac{B\eta-Cq}{(1+p)B} &\frac{C}{B}& 0 &0\\
0& \frac{3}{2}\frac{(B\eta-Cq)^2}{B^2(1+p)g} & \frac{3C( B\eta-Cq)}{B^2g}&\frac{C^2(1+p)}{gB^2}&0\\
0&0&0&0&\frac{gB}{C(1+p)}
\end{array}\right),
$$
where $ B^2=2pr-3q^2+2r,\quad C^2=2g\xi-3\eta^2.$
Solving the system \eqref{pdesys} given in Proposition \ref{prop1} results in a point transformation

$$
\bar{x}=\frac{1}{x}, \quad \bar{u}=x+u .
$$
\end{example}

\begin{example}
Consider the class of non-linear fourth-order ODE

\begin{equation}\label{example1}
u^{(4)}=\frac{(xu_{xxx}+3u_{xx})^\frac{4}{3}-4u_{xxx}}{x} .
\end{equation}

It can be checked that the function $f(x, u, p, q, r)=\frac{(xr+3q)^\frac{4}{3}-4r}{x}$ satisfies the conditions of Theorem \ref{thm1} with identical constant structure equations to the canonical form

\begin{equation}\label{example11}
\bar{u}^{(4)}=\bar{u}_{\bar{x}\bar{x}\bar{x}}^\frac{4}{3},
\end{equation} 
given by

\begin{equation}
\begin{aligned}
 d \theta^1&=3\,\theta^1 \wedge \theta^5-\theta^2 \wedge \theta^5, \\
 d \theta^2&=-3\,\theta^1\wedge \theta^4+\,\theta^2\wedge \theta^4+\frac{3}{2}\,\theta^2 \wedge \theta^5-\theta^3 \wedge \theta^5, \\
 d \theta^3&=-3\,\theta^2 \wedge \theta^4+2\,\theta^3 \wedge \theta^4-\theta^4\wedge \theta^5, \\
 d \theta^4&=-\frac{3}{2}\,\theta^4 \wedge \theta^5,\\
 d \theta^5&=\theta^4 \wedge \theta^5. \\
\end{aligned}
\end{equation}

 So both are equivalent with five symmetries under point transformation.
The construction of the point transformation \eqref{point} requires the computation of the matrix entries \eqref{eqq} by using \eqref{example1} and \eqref{example11} as below 
$$
\left(\begin{array}{ccccc}
b_1 & 0 & 0 & 0& 0\\
b_2 & b_3 & 0 & 0& 0 \\
b_4 & b_5 & b_6 & 0&0 \\
b_7 & b_8 & b_9& b_{10}&0\\
b_{11}& 0&0&0&b_{13}
\end{array}\right)=\setlength{\arraycolsep}{0.8pt}\left(\begin{array}{ccccc}
x& 0 & 0 & 0 & 0\\
\frac{\xi^{\frac{1}{3}}}{(rx+3q)^{\frac{1}{3}}} & \frac{x\xi^{\frac{1}{3}}}{(rx+3q)^{\frac{1}{3}}} & 0 & 0&0 \\
0 & \frac{2\xi^{\frac{2}{3}}}{(rx+3q)^{\frac{2}{3}}}&\frac{x\xi^{\frac{2}{3}}}{(rx+3q)^{\frac{2}{3}}} & 0 &0\\
0& 0 &\frac{3\xi}{rx+3q} & \frac{x\xi }{rx+3q}&0\\
0&0&0&0&\frac{(rx+3q)^{\frac{1}{3}}}{\xi^{\frac{1}{3}}}
\end{array}\right).
$$
Solving the system \eqref{pdesys} given in Proposition \ref{prop1} gives rise to a point transformation

$$
\bar{x}=x, \quad \bar{u}=xu .
$$
\end{example}

\begin{example}
Consider the class of non-linear fourth-order ODE

\begin{equation}\label{example2}
u^{(4)}=\frac{u_{xxx}^2}{u_{xx}}+\frac{4u_{xx}u_{xxx}}{u_x}-\frac{6u_{xx}^3}{u_x^2} .
\end{equation}

It can be verified that the function $f(x, u, p, q, r)=\frac{r^2}{q}+\frac{4qr}{p}-\frac{6q^3}{p^2}$ satisfies the conditions of Theorem \ref{thm2}. Moreover, since  $T^3_{13}= -1$ so $K=1 $ and $f$ is equivalent to the canonical form

\begin{equation}\label{example22}
\bar{u}^{(4)}=\frac{\bar{u}_{\bar{x}\bar{x}\bar{x}}^2}{\bar{u}_{\bar{x}\bar{x}}},
\end{equation} 
It can be checked that they have five symmetries, under point transformation and identical constant structure equations  given by
\begin{equation}
\begin{aligned}
 d \theta^1&=2\sqrt{15}\,\theta^1 \wedge \theta^2+9\,\theta^1 \wedge \theta^3+\frac{6}{5}\sqrt{15}\,\theta^1\wedge \theta^4-\frac{2}{3}\sqrt{15}\,\theta^1\wedge \theta^5-\theta^2\wedge \theta^5, \\
 d \theta^2&=\theta^1\wedge(-7\, \theta^2-\frac{6}{5}\sqrt{15}\, \theta^3-3\, \theta^4+\theta^5)+\theta^2\wedge ( 6\,\theta^3+\frac{3}{5}\sqrt{15}\,\theta^4-\frac{1}{3}\sqrt{15}\, \theta^5)-\theta^3\wedge \theta^5, \\
 d \theta^3&=\frac{4}{5}\sqrt{15}\,\theta^1 \wedge \theta^2-\theta^1 \wedge \theta^3-2\sqrt{15}\,\theta^2\wedge \theta^3-3\,\theta^2\wedge \theta^4+\frac{4}{3}\,\theta^2\wedge \theta^5-\theta^4\wedge \theta^5, \\
 d \theta^4&=\theta^1 \wedge(-\frac{1}{3}\, \theta^2+\frac{8}{15}\sqrt{15}\, \theta^3+ \theta^4+\frac{1}{9}\, \theta^5)
 +\theta^2 \wedge (4\,\theta^3+\frac{2}{5}\sqrt{15}\, \theta^4)+(\theta^3+\frac{1}{3}\sqrt{15}\,\theta^4)\wedge \theta^5,\\
 d \theta^5&=\theta^1 \wedge(\frac{21}{5}\, \theta^2+\frac{6}{5}\,\sqrt{15} \theta^3+\frac{9}{5}\, \theta^4-3\, \theta^5)-(\frac{4}{5}\sqrt{15}\,\theta^2-3\,\theta^3-\frac{3}{5}\sqrt{15}\,\theta^4)\wedge \theta^5,
\end{aligned}
\end{equation}

by choosing $\bar{J_8}=-\frac{\sqrt{15}\xi}{5\eta},~J_8=-\frac{\sqrt{15}(pr-3q^2)}{5qp}.$

The construction of the point transformation \eqref{point} requires the computation of the matrix entries \eqref{eqq} by using \eqref{example2} and \eqref{example22} as below 
$$
\left(\begin{array}{ccccc}
b_1 & 0 & 0 & 0& 0\\
b_2 & b_3 & 0 & 0& 0 \\
b_4 & b_5 & b_6 & 0&0 \\
b_7 & b_8 & b_9& b_{10}&0\\
b_{11}& 0&0&0&b_{13}
\end{array}\right)=\setlength{\arraycolsep}{0.5pt}\left(\begin{array}{ccccc}
\frac{\eta^3(pr-3q^2)^2}{\xi^2p^2q^3}& 0 & 0 & 0 & 0\\
-\frac{\eta^2(pr-3q^2)}{\xi qp^2}&\frac{\eta^2 (pr-3q^2)}{\xi p q^2} & 0 & 0&0 \\
-\frac{\eta(pr-3q^2)}{qp^2} & -\frac{3\eta}{p}& \frac{\eta}{q} & 0 &0\\
-\frac{\xi(pr-3q^2)}{qp^2}& -\frac{\xi(4pr-15q^2)}{p(pr-3q^2)} & -\frac{6\xi q}{pr-3q^2}& \frac{\xi p}{pr-3q^2}&0\\
\frac{\eta(pr-3q^2)}{q \xi p^2}&0&0&0&\frac{\eta(pr-3q^2)}{\xi p q}
\end{array}\right).
$$
Solving the system \eqref{pdesys} given in Proposition \ref{prop1} results in a point transformation

$$
\bar{x}=u, \quad \bar{u}=x .
$$
\end{example}

\begin{example}
Consider the class of non-linear fourth-order ODE

\begin{equation}\label{example3}
u^{(4)}=\frac{-24u_xu_{xx}u_{xxx}+18u_{xx}^3+4u_{xxx}^2u}{-6u_x^2+3u_{xx}u} .
\end{equation}

It can be checked that the function $f(x, u, p, q, r)=\frac{-24pqr+18q^3+4r^2u}{-6p^2+3qu}$ satisfies the conditions of Theorem \ref{thm3}, with identical constant structure equations, given  in equation \eqref{4/3c}, to the canonical form

\begin{equation}\label{example33}
\bar{u}^{(4)}=\frac{4}{3}\frac{u_{xxx}^2}{u_{xx}},
\end{equation} 

 with six symmetries under point transformation.
The construction of the point transformation \eqref{point2} requires the computation of the matrix entries\eqref{b2} by using \eqref{example3} and \eqref{example33} as below  
$$\setlength{\arraycolsep}{1pt}
\left(\begin{array}{cccccc}
b_1 & 0 & 0 & 0& 0&0\\
b_2 & b_3 & 0 & 0& 0&0 \\
b_4 & b_5 & b_6 & 0&0 &0\\
b_7 & b_8 & b_9& b_{10}&0&0\\
b_{11}& 0&0&0&b_{13}&0\\
b_{14}&b_{15}&b_{16}&0&b_{17}&b_{18}
\end{array}\right)=\setlength{\arraycolsep}{0.5pt}
\begin{pmatrix}
-\dfrac{\eta\,u\,a_{13}^2}{\Delta}
& 0 & 0 & 0 & 0 & 0
\\[8pt]
\chi_1&-\dfrac{\eta\,u\,a_{13}}{\Delta}& 0 & 0 & 0 & 0
\\[12pt]
-\dfrac{2\eta(3p^2-qu)}{u\Delta}&\dfrac{4\eta p}{\Delta}&-\dfrac{\eta u}{\Delta}& 0 & 0 & 0
\\[12pt]
\chi_2&\chi_3&\chi_4&-\dfrac{\eta u}{a_{13}\Delta}& 0 & 0
\\[14pt]
0 & 0 & 0 & 0 & a_{13} & 0
\\[10pt]
0 & 0 & 0 & 0 &\chi_5&
\dfrac{1}{a_{13}}
\end{pmatrix},$$
where \begin{equation*}\begin{aligned}\chi_1&=\frac{a_{13}}{3\Delta^2}(a_{13}\xi u\Delta+\eta( 6p(3p^2-2qu)+r u^2)),\\
\chi_2&=-\frac{2}{\Delta^2 u^2 a_{13}}(\xi a_{13}(up^2(6p^2-5uq)+u^3q^2)+\eta p(6p^2(-p^2+qu)+u^2(pr-3q^2))),\\
\chi_3&=\frac{2}{a_{13}u\Delta^2}(2a_{13}\xi pu\Delta-3\eta(p^2\Delta-u^2(\frac{2}{3}pr-q^2))),\\
\chi_4&=-\frac{a_{13}\xi u\Delta-\eta(6p^3-ru^2)}{a_{13}\Delta^2},\\
\chi_5&=\frac{2(a_{13}\xi u \Delta+\eta(6p(p^2-qu)+ru^2))}{3\eta u \Delta},\\
\Delta&=2p^2-qu.
\end{aligned}\end{equation*}

Solving the system \eqref{sys2} given in proposition \ref{prop2} yields in $a_{13}=-\frac{1}{x^2}$,\\$\eta=\frac{x^3}{u^3}(2p^2-qu),\quad \xi=\frac{x^4}{u^4}(-6p^2u+3qu^2-6pqux+6p^3x+rxu^2),\quad  g=\frac{xp}{u^2}+\frac{1}{u}$ and a point transformation can be expressed as

$$
\bar{x}=\frac{1}{x}, \quad \bar{u}=\frac{1}{xu} .
$$
\end{example}
\section{Conclusion}
Non-linearizable fourth-order ODEs under point transformation admitting a five-dimensional Lie point symmetry subalgebra play a central role in both theory and applications, for example, in mechanics (beam theory and elasticity), extensions of the Euler–Bernoulli theory. Symmetry reductions can lead to non-linearizable fourth-order canonical forms admitting a five-dimensional symmetry subalgebra, with a practical integration strategy. 

The equivalence problem for non-linearizable fourth-order ODEs with five-dimensional Lie symmetry subalgebra, under point transformation, was investigated by means of the Inductive Cartan Equivalence Method.

A primary difficulty in this work arises from the implementation of Cartan’s Equivalence Method, where the phenomenon of expression swell is encountered. As a result, the associated coframe and its corresponding computational procedures become highly complex, rendering the analysis of fourth-order ODEs significantly more challenging than that of lower-order ODEs.

We were able to overcome this difficulty by applying the Inductive Cartan Equivalence Method. This method begins by solving the equivalence problem under fiber-preserving transformations, thereby obtaining an initial adapted coframe with the power of making the expressions much simpler than before. Despite this simplification, the computations still remain complicated. Consequently, we also introduced a novel framework for Cartan's method by branching via differential relative invariants and introducing a chain of auxiliary functions.

Through this insightful framework we presented an invariant characterization of non-linearizable fourth-order ODEs under point transformation admitting a five-dimensional point symmetry Lie subalgebra. The results are stated as the main theorems in Section 3. We also proposed a method for constructing the associated point transformations based on invariant coframes, with an illustrative example for each branch.

\setlength{\arraycolsep}{0.5pt}
\section*{Appendix I}
\begin{table}[H]
\begin{adjustbox}{width=1.2\columnwidth,center}
\begin{tabular}{|c|c|c|}
\hline Algebra Type & Generators & The corresponding fourth-order equations \\
\hline $(24,5), \alpha=0$ & $\partial_x, \partial_u, x \partial_x+\alpha u \partial_u, x \partial_u, x^2 \partial_u$ & $u^{(4)}=K {u^{\prime \prime \prime}}^ {\frac{4}{3}}$ \\
\hline $(24,5), \alpha=b+1$ & $\partial_x, \partial_u, x \partial_x+\alpha u \partial_u, x \partial_u, x^2 \partial_u$ & $u^{(4)}=K u^{\prime \prime \prime \prime \frac{b-3}{b-2}}$ \\
\hline $(24,5), \alpha=2$ & $\partial_x, \partial_u, x \partial_x+\alpha u \partial_u, x \partial_u, x^2 \partial_u$ & $u^{(4)}=K u^{\prime \prime \prime 2}$ \\
\hline $(24,5), \alpha=1$ & $\partial_x, \partial_u, x \partial_x+\alpha u \partial_u, x \partial_u, x^2 \partial_u$ & $u^{(4)}=K u^{\prime \prime \prime \frac{3}{2}}$ \\
\hline $(24,5), \alpha=\frac{b+1}{b}$ & $\partial_x, \partial_u, x \partial_x+\alpha u \partial_u, x \partial_u, x^2 \partial_u$ & $u^{(4)}=K u^{\prime \prime \prime \frac{1-3 b}{1-2 b}}$ \\
\hline $(5,5)$ & $\partial_x, \partial_u, x \partial_x-u \partial_u, u \partial_x, x \partial_u$ & $u^{(4)}=\frac{5}{3} \frac{u^{\prime \prime \prime2}}{u^{\prime \prime}}+K u^{\prime \prime \frac{5}{3}}$ \\
\hline $(15,5)$ & $\partial_x, \partial_u, x \partial_x, u \partial_u, x^2 \partial_x$ & $
u^{(4)}=\frac{6 u^{\prime \prime} u^{\prime \prime \prime}}{u^{\prime}}-\frac{6 u^{\prime \prime 3}}{u^{\prime 2}}+K \frac{u^{\prime \prime 3}\left(3-2 \frac{u^{\prime} u^{\prime \prime \prime}}{u^{\prime \prime 2}}\right)^{\frac{3}{2}}}{u^{\prime 2}}
$\\
\hline $(25,5), \mathrm{r}=3$ & $\partial_x, \partial_u, x \partial_u, x^2 \partial_u, x \partial_x+\left(3 u+x^3\right) \partial_u$ & $
u^{(4)}=K e^{\frac{-u^{\prime \prime \prime}}{6}}
$ \\
\hline $(26,5), r=1$ & $\partial_x, \partial_u, x \partial_u, x \partial_x, u \partial_u$ & $
u^{(4)}=K \frac{u^{\prime \prime \prime 2}}{u^{\prime \prime}}
$ \\
\hline $(27,5), r=1$ & $\partial_x, \partial_u, x \partial_u, 2 x \partial_x+u \partial_u, x^2 \partial_x+x u \partial_u$ & $
u^{(4)}=\frac{4}{3} \frac{u^{\prime \prime \prime 2}}{u^{\prime \prime}}+K u^{\prime \prime \frac{7}{3}}
$\\
\hline $(21,5), r=3$ & $\partial_u, u \partial_u, x \partial_u, \xi_2(x) \partial_u, \xi_3(x) \partial_u$ & 
$u^{(4)}=\sum_{i=1}^2 A_i(x) u^{(i+1)}, \xi_k$ satisfy $\xi_k^{(4)}=\sum_{i=1}^2 A_i(x) \xi_k^{(i+1)}, k=2,3$\\
\hline $(6,6)$ & $\partial_x, \partial_u, x \partial_x, u \partial_x, x \partial_u, u \partial_u$ & $
u^{(4)}=\frac{5}{3} \frac{u^{\prime \prime \prime 2}}{u^{\prime \prime}}
$ \\
\hline$(28,6), r=1$ & $\partial_x, \partial_u, x \partial_x, x \partial_u, u \partial_u, x^2 \partial_x+x u \partial_u$ & $u^{(4)}=\frac{4}{3} \frac{u^{\prime\prime \prime2}}{y^{\prime \prime}}$ \\
\hline$(23,6), r=4 $& $\partial_x, u \partial_u, \eta_1(x) \partial_u, \eta_2(x) \partial_y, \eta_3(x) \partial_u, \eta_4(x) \partial_u$, &$ u^{(4)}=a_3 u^{\prime \prime \prime}+a_2 u^{\prime \prime}+a_1 u^{\prime}+a_0 u, ~a_i$ is constants for $i=0,1,2,3$ \\
\hline$(28,8), r=3$ & $\partial_x, \partial_u, x \partial_x, x \partial_u$ & $u^{(4)}=0$ \\
& $u \partial_u, x^2 \partial_x+3 x u \partial_u, x^2 \partial_u, x^3 \partial_u$ \\
\hline 
\end{tabular}
\end{adjustbox}
\caption{Classification of scalar fourth-order ODEs admitting real Lie algebras $(m,n)$, where $n= 5, 6, 8$ }
\end{table}
\begin{itemize}
\item By referring to algebra $(m, n)$, it is meant that $n$ is its dimension and $m$ the class of the algebra as given in \cite{OLVER}. 
\item $\xi_2,\xi_3$ form independent solution of the linear homogenous equation $u^{(n)}=\sum_{i=1}^{2} A_i(x) u^{(i+1)}$, that is, they satisfy  $\xi_k^{(4)}=\sum_{i=1}^{2} A_i(x) \xi_k^{(i+1)}, \quad k=2,3$, and $\eta_i(x),$ $i=1,2,3,4, $ are linearly independent solutions of $u^{(4)}=a_3 u^{\prime \prime \prime}+a_2 u^{\prime \prime}+a_1 u^{\prime}+a_0 u$, we refer the reader to \cite{OLVER}.
\end{itemize}

\section*{Appendix II}
The equivalence of \eqref{2.4} under the fiber-preserving transformation \begin{equation}\bar{x}=\phi(x),~\bar{u}=\psi(x,u)\end{equation} can be expressed as  

\begin{equation}\label{app}
\Phi^*\left(\begin{array}{c}
d\bar{u}-\bar{p}d\bar{x} \\
d\bar{p}-\bar{q}d\bar{x} \\
d\bar{q}-\bar{r}d\bar{x}\\
d\bar{r}-\bar{f}(\bar{x},\bar{u},\bar{p},\bar{q},\bar{r})d\bar{x}\\
d\bar{x}
\end{array}\right)=\left(\begin{array}{ccccc}
a_1 & 0 & 0 &0 &0\\
a_2 & a_3 & 0 &0&0 \\
a_4 & a_5 & a_6& 0&0\\
a_7 & a_8&a_9&a_{10}&0\\
0&&0&0&a_{13}
\end{array}\right)\left(\begin{array}{c}
du-pdx\\
dp-qdx\\
dq-rdx\\
dr-f(x,u,p,q,r)dx\\
dx
\end{array}\right).
\end{equation}
The structure equations for the first loop is similar to \eqref{e2.9} with the same essential torison coefficients as in \eqref{2.10} that also can be translated to $-1$ to get \eqref{2.11}.\\ 

In the \textit{second iteration} of the reduction scheme, absorption leads to structure equations with nonzero essential torisons $T^3_{35},~T^4_{45}$ given explicitly by \begin{equation}T^3_{35}=\frac{a_9a_{13}^2-3a_5a_{13}+3a_2}{a_1}, ~T^4_{45}=\frac{-a_9a_{13}^2-3a_5a_{13}^2+5a_2a_{13}+I_0a_1}{a_1a_{13}},\end{equation} which can be translated to $0$ by normalizing \begin{equation}a_2=a_9a_{13}^2-\frac{1}{2}\frac{a_1I_0}{a_{13}}, a_5=\frac{4}{3}a_9a_{13}-\frac{1}{2}\frac{a_1I_0}{a_{13}^2},\end{equation} where $I_0=-f_r$.\\

In the \textit{third iteration} of the reduction scheme, absorption leads to structure equations with nonzero essential torisons  $T^3_{25},~T^4_{35}$ given by \begin{equation}\begin{aligned}
 T^3_{25}&=-\frac{19}{18}\frac{a_9a_{13}I_0}{a_1}+\frac{I_1}{a_{13}^2}-\frac{7}{3}\frac{a_4}{a_1}+\frac{a_8a_{13}}{a_1}+\frac{8}{9}\frac{a_9^2a_{13}^4}{a_1^2},\\ T^4_{35}&=-\frac{7}{6}\frac{a_9a_{13}I_0}{a_1}+\frac{I_2}{a_{13}^2}-\frac{a_4}{a_1}-\frac{a_8a_{13}}{a_1}+\frac{4}{3}\frac{a_{9}^2a_{13}^4}{a_1^2},\end{aligned}\end{equation} that can be translated to $0$ by normalizing \begin{equation}\begin{aligned}
 a_4&=\frac{2}{3}\frac{a_9^2a_{13}^4}{a_1}-\frac{2}{3}a_9a_{13}I_0+\frac{3}{10}\frac{a_1}{a_{13}^2}I_1+\frac{3}{10}\frac{a_1}{a_{13}^2}I_2,\\
a_8&=\frac{2}{3}\frac{a_9^2a_{13}^3}{a_1}-\frac{1}{2}a_9I_0-\frac{3}{10}\frac{a_1}{a_{13}^3}I_1+\frac{7}{10}\frac{a_1}{a_{13}^3}I_2,\end{aligned}\end{equation} where $I_1=\frac{1}{4}I_0^2-\frac{1}{6}\hat{D}_xI_0, ~I_2= -\frac{1}{2}\hat{D}_xI_0-f_q,$ 
 the  remaining parameters are $a_1,~a_7,~a_9,~a_{13}.$
Substituting the identity values of the group parameters $a_1=1~,a_7=0,~a_{9}=0,~a_{13}=1$ in the last coframe yields \eqref{2.1} and \eqref{2.2}.

\end{document}